\newtheoremstyle{thmlike}
{8pt}
{3pt}
{\slshape}
{}
{\bfseries}
{.}
{1em}
{}
\newtheoremstyle{deflike}
{8pt}
{3pt}
{}
{}
{\bfseries}
{.}
{1em}
{}
\theoremstyle{thmlike}
\newtheorem*{theorem*}{Theorem}
\newtheorem{theorem}{Theorem}[section]
\newtheorem{proposition}[theorem]{Proposition}
\newtheorem{lemma}[theorem]{Lemma}
\newtheorem{corollary}[theorem]{Corollary}
\theoremstyle{deflike}
\newtheorem{definition}[theorem]{Definition}
\newtheorem{remark}[theorem]{Remark}
\newtheorem{hypothesis}[theorem]{HYPOTHESIS}
\newcommand{\opp}{\mathrm{op}}
\newcommand{\Pord}{P-\mathrm{ord}}
\newcommand{\sgl}{\mathrm{Sgl}}
\newcommand{\tp}[1]{\prescript{t}{}{#1}} 
\newcommand{\diag}{\operatorname{diag}}
\newcommand{\End}{\operatorname{End}}
\renewcommand{\ker}{\operatorname{ker}}
\renewcommand{\hom}{\operatorname{Hom}}
\newcommand{\isom}{\operatorname{Isom}}
\newcommand{\supp}{\operatorname{supp}}
\newcommand{\Gm}{\operatorname{\mathbb{G}_{m}}}
\newcommand{\Gal}{\operatorname{Gal}}
\newcommand{\GL}{\operatorname{GL}}
\newcommand{\Lie}{\operatorname{Lie}}
\newcommand{\Char}{\operatorname{char}}
\newcommand{\cond}{\operatorname{cond}}
\renewcommand{\det}{\operatorname{det}}
\newcommand{\nm}{\operatorname{Nm}}
\newcommand{\ord}{\operatorname{ord}}
\newcommand{\rk}{\operatorname{rank}}
\newcommand{\tr}{\operatorname{tr}}
\renewcommand{\v}{\operatorname{\nu}} 
\newcommand{\vol}{\operatorname{Vol}}
\newcommand{\ind}[2]{\operatorname{\iota}_{#1}^{#2}}
\newcommand{\Ind}{\operatorname{Ind}}
\newcommand{\Rep}{\operatorname{Rep}}
\newcommand{\cg}[1]{\widetilde{#1}} 
\renewcommand{\AA}{\mathbb{A}}
\newcommand{\CC}{\mathbb{C}}
\newcommand{\GG}{\mathfrak{G}}
\newcommand{\JJ}{\mathcal{J}}
\newcommand{\KK}{\mathcal{K}}
\newcommand{\OO}{\mathcal{O}}
\newcommand{\PP}{\mathcal{P}}
\newcommand{\QQ}{\mathbb{Q}}
\newcommand{\bQQ}{\overline{\QQ}}
\newcommand{\RR}{\mathbb{R}}
\newcommand{\ZZ}{\mathbb{Z}}
\newcommand{\bZZ}{\overline{\ZZ}}
\renewcommand{\i}{\iota}
\newcommand{\g}{\mathfrak{g}} 
\newcommand{\p}{\mathfrak{p}} 
\newcommand{\s}{\mathfrak{s}}
\newcommand{\X}{\mathfrak{X}}
\renewcommand{\d}{\bold{d}} 
\newcommand{\la}{\langle} 
\newcommand{\ra}{\rangle}
\newcommand{\incl}{\mathrm{incl}}
\newcommand{\pr}{\mathrm{pr}} 
\newcommand{\brkt}[2]{\la #1, #2 \ra}
\newcommand{\absv}[1]{\left|#1\right|}
\newcommand {\ol}[1] {\overline{#1}}
\newcommand {\ul}[1] {\underline{#1}}
\newcommand{\at}{\makeatletter @\makeatother}
\begin{document}
\pagestyle{fancy}
\fancyhf{}
\fancyhead[RO,LE]{\footnotesize\thepage}
\fancyhead[CE]{\footnotesize\leftmark}
\fancyhead[CO]{\footnotesize $p$-ADIC ZETA INTEGRALS}
\renewcommand{\headrulewidth}{0pt}

\title{$p$-adic zeta integrals on unitary groups via Bushnell-Kutzko types}
\author[D. Marcil]{David Marcil}
\date{\today}
\address{David Marcil, Department of Mathematics, Columbia University, New York, NY 10027, USA}
\email{d.marcil\at columbia.edu}
\subjclass[2010]{Primary: 11F85, 11S40; Secondary: 11F55, 11F66.}
\keywords{Bushnell-Kutzko types, $P$-ordinary representations, zeta integrals, $L$-functions.}

\maketitle

\begin{abstract}
    In this paper, we compute certain $p$-adic zeta integrals appearing in the doubling method of Garrett and Piatetski-Shapiro–Rallis for unitary group. Using structure theorems in \cite{Mar23a} for $P$-(anti-)ordinary automorphic representations involving Bushnell-Kutzko types, we associate local Siegel-Weil sections at $p$ to such Bushnell-Kutzko types. Then, fixing compatible choices of $P$-anti-ordinary vectors, we find explicit formulae relating the corresponding $p$-adic zeta integral to modified $p$-Euler factors and volumes of $P$-Iwahoric subgroups. Our results extend the ones of \cite[Section 4.3]{EHLS} by allowing automorphic representations with nontrivial supercuspidal support at $p$.
\end{abstract}

\tableofcontents

\section*{Introduction}
In recent years, the so-called \emph{doubling method} of Garrett and Piatetski-Shapiro--Rallis \cite{Gar84, GPSR87} has been used in several situations to compute special values of $L$-functions and construct $p$-adic $L$-functions, see \cite{Liu19, Liu21, LiuRos20, EisLiu20, EHLS}.

One of the major accomplishment of \cite{EHLS} is the explicit computation of the local $p$-adic zeta integral provided by the doubling method when working with ordinary automorphic representation on a unitary group $G$ over a CM field $\KK$. They use this result, together with many other technical considerations, to construct a $p$-adic $L$-functions of Hida families of such representations. The goal of this paper is to extend the results of \cite[Section 4.3]{EHLS} by considering $P$-ordinary representations instead, where $P$ is some parabolic subgroup of $G_{/\ZZ_p}$. We recover the formulae of \emph{loc. cit.} when $P$ is essentially a product $B$ of upper triangular Borel subgroups. In other words, ordinary representations are equivalently $B$-ordinary and our results agree with known formulae when $P=B$.

This completes crucial computations in an ongoing project of the author to construct $p$-adic $L$-functions for $P$-ordinary families on $G$. If the center of a Levi factor of $P$ has rank $d$, then these are $(d+1)$-variable $p$-adic $L$-functions. The underlying goal is again to extend the main result of \cite{EHLS}. However, the author also hopes to use general techniques developed here to deal with $p$-adic integrals in other projects.

\subsubsection*{Main result.} 
In general, if $\pi$ is an ordinary cuspidal automorphic representation of $G$, then its local $p$-factor $\pi_p$ has trivial supercuspidal support and this leads to the existence of an \emph{ordinary nebentypus character} $\mu$ associated to $\pi$. This character $\mu$, as well as existence of ordinary vectors of $\pi_p$ on which Iwahori subgroups act via $\mu$, play key roles in the computations of \cite{EHLS}. However, if $\pi$ is instead $P$-ordinary for some parabolic subgroup $P$ of $G$, then $\pi_p$ no longer has trivial supercuspidal support. Furthermore, the analogous $P$-Iwahori subgroups now act on spaces of $P$-ordinary vectors via smooth finite-dimensional representations of $L_P$, a Levi factor of $P$. In \cite{Mar23a}, the author associates canonically such a representation $\tau$ to $\pi_p$, using the theory of Bushnell-Kutzko types \cite{BusKut98} and results of \cite{Pas05}, and construct $P$-ordinary vectors of \emph{type} $\tau$.

Therefore, our approach generalizes the construction of \cite{EHLS} by replacing $\mu$ with a general type $\tau$. In the process, we deal with the several challenges related to the dimension of the latter. In particular, one novelty of this paper is the construction of local Siegel-Weil sections (involved in the doubling method) associated to such types. Then, the main accomplishment of this paper is the computation of $p$-adic zeta integrals corresponding to such Siegel-Weil sections and the $P$-ordinary vectors of type $\tau$ mentioned above.

When $\tau$ is a character, we recover results of \cite{EHLS}. However, the author hopes that some of the techniques used here, using the point of view of Bushnell-Kutzko types and covers in the sense of \cite{BusKut98, BusKut99}, help to simplify and motivate various definitions that may seem \emph{ad hoc} in the ordinary setting. Our main result is Theorem \ref{main local thm}, which roughly says the following : 

\begin{theorem*}[Main Local Theorem]
    Let $\varphi_p \in \pi_p$ and $\cg{\varphi}_p \in \cg{\pi}_p$ be the $P$-anti-ordinary test vectors, of type $\tau$ and $\cg{\tau}$ respectively, defined in \eqref{def test vec varphi p and cg varphi p}. Let $\chi$ be a unitary Hecke character of $\KK$, $\chi_p = \otimes_{w \mid p} \chi_w$, and let $s \in \CC$. Let $f_{\tau, \cg{\tau}}^+ \in I_p(\chi_p, s)$ be the local Siegel-Weil section defined in Section \ref{construction f+}. 
    
    Then, the $p$-adic local zeta integral $I_p(\varphi_p, \cg{\varphi}_p, f_{\tau, \cg{\tau}}^+; \chi_p, s)$ defined in \eqref{def local zeta integrals} is equal to
    \begin{equation} \label{main eq intro}
        E_p\left( 
            s+\frac{1}{2}, \Pord, \pi_p, \chi_p
        \right)
            \cdot
        \frac{
            \vol(I_{P, r}^0)
            \vol(\tp{I}_{P, r}^0)
        }{
            \vol(I_{P, r}^0 \cap 
            \tp{I}_{P, r}^0)
        }
    \end{equation}
    where $E_p$ is a modified Euler factor at $p$, defined in Section \ref{section main local thm}.
\end{theorem*}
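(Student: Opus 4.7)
The plan is to unfold the zeta integral $I_p(\varphi_p, \cg{\varphi}_p, f_{\tau,\cg{\tau}}^+; \chi_p, s)$ against the explicit construction of $f_{\tau,\cg{\tau}}^+$. Recall that this integral is defined via integration over $G(\QQ_p)$ of a matrix coefficient $\brkt{\pi_p(g)\varphi_p}{\cg{\varphi}_p}$ against $f_{\tau,\cg{\tau}}^+$ evaluated on the doubling embedding. I would first exploit the fact that, by construction, $f_{\tau,\cg{\tau}}^+$ is supported on a specific $P$-Siegel cell dictated by the types $\tau$ and $\cg{\tau}$. This immediately restricts the integration to a subset where the Siegel-Weil section can be evaluated explicitly in terms of $\chi_p$ and a matrix coefficient of the pair $(\tau, \cg{\tau})$.

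The next step is to use the $P$-anti-ordinary structure of $\varphi_p$ and $\cg{\varphi}_p$ developed in \cite{Mar23a}: these vectors transform under $I_{P,r}^0$ and $\tp{I}_{P,r}^0$ via $\tau$ and $\cg{\tau}$ respectively. Decomposing the domain of integration along the product $I_{P,r}^0 \cdot \tp{I}_{P,r}^0$ and its intersection $I_{P,r}^0 \cap \tp{I}_{P,r}^0$, the three transformation laws (of $\varphi_p$, $\cg{\varphi}_p$, and $f_{\tau,\cg{\tau}}^+$) combine to collapse the inner integrations into exactly the volume ratio $\vol(I_{P,r}^0) \vol(\tp{I}_{P,r}^0) / \vol(I_{P,r}^0 \cap \tp{I}_{P,r}^0)$. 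This ratio reflects the normalization required to pair $\tau$ with $\cg{\tau}$ consistently against the values of the Siegel-Weil section.

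What remains of the integral is a sum over a set of representatives, essentially parametrized by central cocharacters of the Levi $L_P$ acting on the chosen cell. I expect this sum to be computed via a geometric series identical in shape to the one in the ordinary setting of \cite[Section 4.3]{EHLS}, except that each term now involves the action of $\tau$ (resp. $\cg{\tau}$) on a distinguished vector in the type. After taking the pairing of $\tau$ with $\cg{\tau}$ built into $f_{\tau,\cg{\tau}}^+$, these contributions should assemble into the modified Euler factor $E_p(s+\tfrac{1}{2}, \Pord, \pi_p, \chi_p)$ defined in Section \ref{section main local thm}, exactly matching the shape predicted by the ordinary case when $\tau$ specializes to a character.

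The main obstacle will be handling the higher-dimensional nature of the types. In the ordinary case $\tau$ is a character and every intertwiner is scalar, so pairings trivialize; here one must carefully track how $\tau$ and $\cg{\tau}$ pair through the doubling embedding without such shortcuts. The compatibility designed into the construction of $f_{\tau,\cg{\tau}}^+$ is precisely what should reduce the relevant pairing to a scalar, after which the geometric series computation proceeds as in the ordinary setting. Verifying this compatibility, together with checking that the distinguished $P$-anti-ordinary vectors of \cite{Mar23a} realize the pairing on the nose, will constitute the technical heart of the argument; the remaining passage from matrix coefficients to the explicit factor $E_p$ is then a bookkeeping exercise modeled on the strategy of \cite{EHLS}.
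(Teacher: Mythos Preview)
Your proposal misses the central mechanism by which the Euler factor $E_p$ actually appears. You expect a ``geometric series'' over ``central cocharacters of the Levi $L_P$'' to produce the $L$- and $\epsilon$-factors, by analogy with the unramified ordinary case. But in the $P$-ordinary setting the local pieces $\pi_{a_w}$, $\pi_{b_w}$ have nontrivial supercuspidal support, so there is no Satake-parameter geometric series to sum. In the paper the Euler factor arises instead from the \emph{Godement--Jacquet functional equation}: after reducing $Z_w$ to a product $\mathcal{I}_{a_w}\cdot\mathcal{I}_{b_w}$, each $\mathcal{I}_{\bullet}$ is recognized as a Godement--Jacquet zeta integral for $\GL_{a_w}$ (resp.\ $\GL_{b_w}$), and the functional equation swaps it for an integral on the other side, producing precisely the ratio of $L$- and $\epsilon$-factors defining $L(s+\tfrac12,\mathrm{ord},\pi_w,\chi_w)$. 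This step is not optional bookkeeping; it is the only way the abstract Godement--Jacquet $L$-factors for a general admissible representation can enter the computation.

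Two further ingredients are missing from your sketch and are essential to make the above work. First, the Siegel--Weil section is built from a Schwartz function one piece of which, $\Phi_{2,w}$, is the \emph{Fourier transform} of (a twist of) the extended matrix coefficient $\mu_w$; after the functional equation this Fourier transform is undone, and the resulting integrand is again a matrix coefficient of $\tau_{a_w}$ (resp.\ $\tau_{b_w}$). Second, the collapse to a scalar that you anticipate is achieved not by a generic ``compatibility'' but by two applications of \emph{Schur orthogonality} for matrix coefficients of the irreducible finite-dimensional types $\tau_{a_w}$, $\tau_{b_w}$ on the compact groups $K_{a_w}$, $K_{b_w}$; this is also where the factors $(\dim\tau_{a_w})^{-2}(\dim\tau_{b_w})^{-2}$ appear, cancelling the $(\dim\tau_w)^2$ built into $\Phi_w$. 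The volume ratio you cite then falls out of the identity $\vol(\X^{(1)})\vol(\X^{(4)}) = \vol(I^0_{w,r})\vol(\tp{I}^0_{w,r})/\vol(I^0_{w,r}\cap\tp{I}^0_{w,r})$, not directly from a decomposition of the integration domain along $I^0_{P,r}\cdot\tp{I}^0_{P,r}$.
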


\subsubsection*{Additional comments.}
As the statement of our main theorem eludes, our result actually involves $P$-anti-ordinary vectors as opposed to $P$-ordinary vectors. This is simply due to the duality relations between Eisenstein series and automorphic representations in the doubling method. Relevant comparisons for these two notions are considered in \cite{Mar23a}. 

Our approach is actually formulated using vectors $\varphi_p$ and $\cg{\varphi}_p$ whose behavior under $P$-Iwahori subgroups are similar to the ones of $P$-anti-ordinary vectors, see \cite[Theorem 4.3, Lemma 4.6]{Mar23a}. However, we never explicitly assume that either of these vectors is $P$-(anti-)ordinary. The author hopes that by proceeding this way, our treatment is general enough to also be applied in other settings not considered in \cite{Mar23a}. 

For instance, throughout this paper, we assume a certain splitting condition on the prime $p$. However, when removing this condition, one works with the analogous notion of $\mu$-ordinary representations, in the sense of \cite{EisMan21}. We hope that our approach may be adapted in the future to this setting by replacing the theory of types and covers for general linear groups over local fields by the analogous theory for other classical groups (perhaps using results of \cite{MS}).

Furthermore, as explained in \cite[Remark 9.3.4]{EHLS}, the analogous computation in the literature of local zeta integrals at finite places away from $p$ over which $\pi$ ramifies is still unsatisfactory. The current approach is to choose local Siegel-Weil sections and test vectors at such primes so that the corresponding integral yields a constant volume factor. However, one could potentially use techniques developped in this paper to replace those non-optimal choices and use the doubling method to obtain better local zeta integrals related to Euler factors at these finite places of $L$-functions.

\subsubsection*{Structure of this paper.} 

In Section \ref{not and conv}, we first set some notation and introduce underlying assumptions for our work. We also review the theory of Bushnell-Kutzko types relevant for us. 

In Section \ref{P Iwa section}, we introduce the similitude unitary group $G$ on which we work and a certain parabolic subgroup $P$ of $G$. We then define level subgroups of $G(\ZZ_p)$ that are $P$-Iwahoric (of some level $r$). These two sections share many similarities with the set up of \cite{Mar23a}. We then conclude Section \ref{P Iwa section} by constructing specific vectors $\varphi_p$ and $\cg{\varphi}_p$ in $p$-factors of automorphic representations whose action under $P$-Iwahoric subgroups are described by certain types $\tau$ and $\cg{\tau}$. Our choices are inspired by the structure theorems for $P$-(anti-)ordinary representations proved in \cite{Mar23a}.

In Section \ref{doubling method}, we recall the set up of the doubling method for unitary groups and introduce the relevant local zeta integral. We then focus on the factor at $p$ and construct the local Siegel-Weil section $f_{\tau, \cg{\tau}}^+$ mentioned in the main theorem above. In doing so, one novelty of this paper is to extend matrix coefficients related to $\tau$ and $\cg{\tau}$ (defined as locally constant functions of $L_P(\ZZ_p)$) to locally constant functions supported on much larger open compact subsets of $G(\ZZ_p)$. This necessary step is already present in \cite[Section 4.3.1]{EHLS} and plays a crucial role to obtain the volume factors in \eqref{main eq intro}. To do so, they introduce a certain ``telescoping'' product (see \cite[p.56]{EHLS}). Although the formula is not complicated, it crucially relies on the fact the the analogue of $\tau$ is a character. Our approach generalizes their extension for $\tau$ of any dimension. We hope that our treatment, from the point of view of Bushnell-Kutzko types and covers, simplifies and motivates this step (see Equations \eqref{def extn mu aw} and \eqref{def ext mu w} below).

In Section \ref{comp above p}, we finally write down the expression for the $p$-adic zeta integral associated to $\varphi_p$, $\cg{\varphi}_p$ and $f^+_{\tau, \cg{\tau}}$. We then proceed to simplify it by obtaining as many cancellations as possible until we can finally use the Godement-Jacquet functional equation of \cite{Jac79}.

\subsubsection*{Acknowledgments} This paper contains the core computations necessary in the second third of the author's thesis supervised by Michael Harris. I am profoundly grateful for his many insightful comments to approach this problem and the necessary computations. In particular, I wish to thank Harris for his suggestions to look at the theory of Bushnell-Kutzko types and attempt to expose their usefulness to compute local zeta integrals.

I also wish to thank both Ellen Eischen and Christopher Skinner for helpful discussions where they provided encouragements and answers to my questions.

Lastly, many steps in the argument of Sections \ref{conv test vectors}, \ref{SW section at p} and \ref{comp of Zw} rely upon ideas found in the ordinary settings, most notably in \cite{EHLS}. It would have been far more difficult for me to carry out the simplifications in my calculations without the precise details provided by the four authors of this papers.

\section{Notation and conventions.} \label{not and conv}
Let $\bQQ \subset \CC$ be the algebraic closure of $\QQ$ in $\CC$. For any number field $F \subset \bQQ$, let $\Sigma_F$ denote its set of complex embedding $\hom(F, \CC) = \hom(F, \bQQ)$.

Throughout this article, we fix a CM field $\KK \subset \bQQ$ with ring of integers $\OO = \OO_{\KK}$. Let $\KK^+$ be the maximal real subfield of $\KK$ and denote its ring of integers as $\OO^+ = \OO_{\KK^+}$. Let $c \in \Gal(\KK/\KK^+)$ denote complex conjugation, the unique nontrivial automorphism. Given a place $v$ of $\KK$, we usually denote $c(v)$ as $\bar{v}$.

\subsection{CM types and local places.} \label{CM types}
Fix an integer prime $p$ that is unramified in $\KK$. Throughout this paper, we assume the following :

\begin{hypothesis}\label{above p split}
	Each place $v^+$ of $\KK^+$ above $p$ totally split as $v^+ = v \bar{v}$ in $K$.
\end{hypothesis}

Fix an algebraic closure $\bQQ_p$ of $\QQ_p$ and an embedding $\incl_p : \bQQ \hookrightarrow \bQQ_p$. Define
\[
	\bZZ_{(p)} = \{z \in \bQQ : \v_p(\incl_p(z)) \geq 0 \} \ ,
\]
where $\v_p$ is the canonical extension to $\bQQ_p$ of the normalized $p$-adic valuation on $\QQ_p$. 

Let $\CC_p$ be the completion of $\bQQ_p$. The map $\incl_p$ yields an isomorphism between its valuation ring $\OO_{\CC_p}$ and the completion of $\bZZ_{(p)}$ which extends to an isomorphism $\i : \CC \xrightarrow{\sim} \CC_p$.

Fix an embedding $\i_\infty : \QQ \hookrightarrow \CC$ such that $\incl_p = \i \circ \i_\infty$ and identify $\bQQ$ with its image in both $\CC$ and $\CC_p$.

Given $\sigma \in \Sigma_{\KK}$, the embedding $\incl_p \circ \sigma$ determines a prime ideal $\p_\sigma$ of $\Sigma_\KK$. There may be several embeddings inducing the same prime ideal. Similarly, given a place $w$ of $\KK$, let $\p_w$ denote the corresponding prime ideal of $\OO$.

Under Hypotesis \ref{above p split}, for each place of $\KK^+$ above $p$, there are exactly two primes of $\OO$ are both above it. Fix a set $\Sigma_p$ containing exactly one of these prime ideals for each place of $\KK^+$ above $p$. Moreover, let $\Sigma = \{\sigma \in \Sigma_\KK \mid \p_\sigma \in \Sigma_p\}$, a CM type of $\KK$ (see \cite[p.202]{Kat78}).
\subsection{Bushnell-Kutzko Types.} \label{BK types}
To discuss the local theory of $P$-ordinary representations in later sections, let us recall the theory of Bushnell-Kutzko types and covers, adapting the notions of \cite{BusKut98} and \cite[Section 3]{Lat21} to our setting.

Let $F = \KK_w$ for some place $w$ of $\OO_\KK$ and write $\OO_F$ for $\OO_{\KK_w}$. Let $G = \GL_n(F)$ for some integer $n$.

\subsubsection{Parabolic inductions.}
For any parabolic subgroup $P$ of $G$, let $L$ and $P^u$ be a Levi factor and its unipotent radical, respectively. Let $\delta_P : P \to \CC^\times$ denote its modulus character.

Recall that $\delta_P$ factors through $L$. Moreover, if $P$ is the standard parabolic subgroup associated to the partition $n = n_1 + \ldots + n_s$, one has
\begin{equation}
	\delta_P(l) = \prod_{k=1, \ldots, s} \absv{\det(l_k)}^{-\sum_{i < k} n_i + \sum_{j > k} n_j}
\end{equation}
for any $l = (l_1, \ldots, l_s)$ in $L = \prod_{k=1}^s \GL_{n_k}(F)$.

Given a smooth representation $\sigma$ of $L$, we often consider $\sigma$ as a representation of $P$ without comments. Let $\Ind_P^{G} \sigma $ denote the classical parabolic induction functor from $P$ to $G$.  Similarly, the \emph{normalized} parabolic induction functor is
\[
    \ind{P}{G} \sigma = \Ind_P^{G} (\sigma \otimes \delta_P^{1/2})
\]

In Sections \ref{doubling method} and \ref{comp above p}, we prefer to work with the normalized version but the main computations can entirely be done with unnormalized parabolic induction as well.

\subsubsection{Supercuspidal support} \label{sc support}
A theorem of Jacquet (see \cite[Theorem 5.1.2]{Cas95}) implies that given any irreducible representation $\pi$ of $G$, one may find a parabolic subgroup $P$ of $G$ with Levi subgroup $L$ and a supercuspidal representation $\sigma$ of $L$ such that $\pi \subset \ind{P}{G} \sigma$. .

The pair $(L, \sigma)$ is uniquely determined by $\pi$, up to $G$-conjugacy and one refers to this conjugacy class as the \emph{supercuspidal support} of $\pi$.

Consider two pairs $(L, \sigma)$ and $(L', \sigma')$ consisting of a Levi subgroup of $G$ and one of its supercuspidal representation. One says that they are \emph{$G$-inertially equivalent} if there exists some $g \in G$ such that $L' = g^{-1}Lg$ and some unramified character $\chi$ of $L'$ such that $\prescript{g}{}{\sigma} \cong \sigma' \otimes \chi$, where $\prescript{g}{}{\sigma}(x) = \sigma(gxg^{-1})$. We write $[L, \sigma]_G$ for the $G$-inertial equivalence class of $(L, \sigma)$.

For such a class $\s$, let $\Rep^\s(G)$ denote the full subcategory of $\Rep(G)$ whose objects are the representations such that all their irreducible subquotients have inertial equivalence class $\s$. The Bernstein-Zelevinsky geometric lemma (see \cite[Section VI.5.1]{Ren10}) implies that $\ind{P}{G} \sigma \in \Rep^\s(G)$, where $\s = [L, \sigma]_G$.

\begin{definition}[\cite{BusKut98}]
    Let $J$ be a compact open subgroup of $G$ and $\tau$ be an irreducible represention of $J$. Let $\Rep_\tau(G)$ denote the full subcategory of $\Rep(G)$ whose objects are the representations generated over $G$ by their $\tau$-isotypic subspace. We say that $(J, \tau)$ is an $\s$-type if $\Rep_\tau(G) = \Rep^\s(G)$. 
\end{definition}

Let $\s_L = [L, \sigma]_L$. It follows from \cite[Theorem 1.3]{Pas05} that there exists a unique (up to isomorphism) representation $\tau$ of $K = G(\OO_F)$ such that $(K, \tau)$ is an $\s$-type. We refer to this unique ``maximal'' type of $\s_L$ as the \emph{BK-type} of $\pi$.

\subsubsection{Covers of types.}
Fix a Levi subgroup $L$ of $G$ as well as an irreducible supercuspidal representation $\sigma$ of $L$. Let $\s_L = [L, \sigma]_L$ denote its inertial support and let $(J, \tau)$ be an $\s_L$-type.

\begin{definition}[\cite{BusKut98}] \label{Def G cover}
    Let $\JJ$ be a compact open subgroup of $G$ and $\uptau$ be an irreducible representation of $\JJ$. We say that $(\JJ, \uptau)$ is a \emph{$G$-cover} (or a \emph{cover to $G$)} of $(J, \tau)$ if all of the following properties are satisfied : 
    \begin{enumerate}
        \item[(i)] $\JJ \cap L = J$
        \item[(ii)] $\uptau|_{J} = \tau$. In particular, both $\uptau$ and $\tau$ act on the same vector space.
        \item[(iii)] Let $P$ be any parabolic of $G$ with Levi factor $L$. Write $P^u$ for its unipotent radical and $P^l$ for the opposite of $P^u$. Let $\JJ^l = \JJ \cap P^l$ and $\JJ^u = \JJ \cap P^u$. Then, $\JJ = \JJ^l J \JJ^u$.
        \item[(iv)] The kernel of $\uptau$ contains both $\JJ^l$ and $\JJ^u$.
        \item[(v)] For any smooth representation $(\Pi, \mathcal{V})$ of $G$, the natural $L$-homomorphism $\pr_P : \mathcal{V} \to \mathcal{V}_P$ restricts to an injection on the $\uptau$-isotypic subspace of $\Pi$.
    \end{enumerate}
\end{definition}

The main result of \cite[Theorem 3.10]{Mar23a} is concerned with the construction of a canonical cover of the BK-type $\tau$ of $\pi$ inside the vector space associated to $\pi$, see \cite[Remark 3.12]{Mar23a}. Such covers are constructed in various settings of $P$-(anti-)ordinary representations. Their properties guide our choice of test vectors in Section \ref{conv test vectors}.

\section{$P$-Iwahoric level subgroups of unitary groups.} \label{P Iwa section}

\subsection{Unitary Groups} \label{unitary groups} 
Let $V$ be a finite-dimensional $\KK$-vector space, equipped with a pairing $\brkt{\cdot}{\cdot}_{V}$ that is Hermitian with respect to the quadratic extension $\KK/\KK^+$. Write $n = \dim_\KK V$.

\subsubsection{PEL datum of unitary type} \label{PEL unitary}
Let $\delta \in \OO$ be totally imaginary and prime to $p$ and define $\brkt{\cdot}{\cdot} = \tr_{\KK / \QQ}(\delta \brkt{\cdot}{\cdot}_{V})$. This choice of $\delta$ and our Hypothesis \eqref{above p split} ensure the existence of an $\OO$-lattice $L \subset V$ such that the restriction of $\brkt{\cdot}{\cdot}$ to $L$ is integral and yields a perfect pairing on $L \otimes \ZZ_p$.

For each $\sigma \in \Sigma_{\KK}$, let $V_{\sigma}$ denote $V \otimes_{\KK, \sigma} \CC$. It has a $\CC$-basis diagonalizing the pairing $\brkt{\cdot}{\cdot}$. The only eigenvalues must be $\pm 1$, say that $1$ (resp. $-1$) has multiplicity $r_{\sigma}$ (resp. $s_{\sigma}$). We order the basis so that the $+1$-eigenvectors appear first. Fixing such a basis, let $h_{\sigma} : \CC \to \End_{\RR}(V_{\sigma})$ be $h_{\sigma} = \diag(z 1_{r_{\sigma}}, \bar{z} 1_{s_{\sigma}})$.

Let $h = \prod_{\sigma \in \Sigma} h_{\sigma} : \CC \to \End_{\KK^+ \otimes \RR}(V \otimes \RR)$, using the canonical identification
\[
    \prod_{\sigma \in \Sigma}
        \End_{\RR}(V_{\sigma}) 
    = 
        \End_{\KK^+ \otimes \RR}(V \otimes \RR)
\]
provided by our fixed choice of CM type $\Sigma$ of $\KK$.

The tuple 
$
    \PP = 
        (
            \KK, c, \OO, L, 
            2\pi\sqrt{-1}\brkt{\cdot}{\cdot}, h
        )
$ 
is a PEL datum of unitary type, as defined in \cite[Section 2.1-2.2]{EHLS}. It has an associate group scheme $G = G_\PP$ over $\ZZ$ whose $R$-points are
\begin{equation} \label{def sim uni gp}
	G(R) = \{ (g, \nu) \in \GL_{\OO \otimes R}(L \otimes R) \times R^\times \mid \brkt{gx}{gy} = \nu \brkt{x}{y}, \forall x, y \in L \otimes R \},
\end{equation}
for any commutative ring $R$. In particular, $G_{/\QQ}$ is a reductive group. Moreover, the assumptions on $p$ imply that $G_{/\ZZ_p}$ is smooth and $G(\ZZ_p)$ is a hyperspecial maximal compact of $G(\QQ_p)$.

\subsubsection{Ordinary hypothesis on signature} \label{Hodge structure}
The homomorphism $h$ determines a pure Hodge structure of weight $-1$ on $V_\CC = L \otimes \CC$, i.e. $V = V^{-1, 0} \oplus V^{0,-1}$ and $h(z)$ acts as $z$ on $V^{-1,0}$ and as $\bar{z}$ on $V^{0,-1}$. In particular, the $\OO \otimes \CC$-submodule $V^0 \subset V$ defined as the degree 0 piece of the corresponding Hodge filtration is simply $V^{-1,0}$. 

For each $\sigma \in \Sigma_{\KK}$, let $a_{\sigma} = \dim_{\CC} (V^0 \otimes_{\OO \otimes \CC, \sigma} \CC)$ and $b_{\sigma} = n - a_{\sigma}$. The \emph{signature} of $h$ is defined as the collection of pairs $\{ (a_{\sigma}, b_{\sigma} )_{\sigma \in \Sigma_\KK} \}$. In fact, $(a_{\sigma}, b_{\sigma}) = (r_{\sigma}, s_{\sigma})$ if $\sigma \in \Sigma$. Otherwise, one has $(a_{\sigma}, b_{\sigma}) = (s_{\sigma}, r_{\sigma})$.

\begin{hypothesis}[Ordinary hypothesis]
	For all embeddings $\sigma, \sigma' \in \Sigma_{\KK}$, if $\p_{\sigma} = \p_{\sigma'}$, then $a_{\sigma} = a_{\sigma'}$.
\end{hypothesis}

Throughout this paper, we assume Hypothesis 2.1. Therefore, given a place $w$ of $\KK$ above $p$, one can define $(a_{w}, b_{w}) := (a_{\sigma}, b_{\sigma})$, where $\sigma \in \Sigma_\KK$ is any embedding such that $\p_\sigma = \p_w$.
\subsection{Structure of $G$ over $\ZZ_p$.}
\subsubsection{Comparison to general linear groups.} \label{G(Zp) to GLn}
The factorization $\OO \otimes \ZZ_p = \prod_{w \mid p} \OO_{w}$, over primes $w$ of $\KK$ above $p$, yields a decomposition $L \otimes \ZZ_p = \prod_{w \mid p} L_w$. It corresponds to 
\begin{equation}
    \GL_{
        \OO \otimes \ZZ_p
    }(L \otimes \ZZ_p) 
        \xrightarrow{\sim} 
    \prod_{w \mid p} 
        \GL_{\OO_w}(L_w), 
            \ \ \ \ g \mapsto (g_{w}) \ ,
\end{equation}
a canonical $\ZZ_p$-isomorphism. From the above, one obtains the identification
\begin{equation} \label{prod G over Zp}
    G_{/\ZZ_p} 
        \xrightarrow{\sim} 
    \Gm \times 
    \prod_{w \in \Sigma_p} 
        \GL_{\OO_w}(L_w), 
            \ \ \ \ (g, \nu) \mapsto (\nu, (g_{w})) \ .
\end{equation}

Furthermore, our assumption above about the pairing $\brkt{\cdot}{\cdot}$ implies that for each $w \mid p$, there is an $\OO_w$-decomposition of $L_w = L_w^+ \oplus L_w^-$ such that 
\begin{enumerate}
	\item $\rk_{\OO_w}{L_w^+} = a_{w}$ and $\rk_{\OO_w}{L_w^-} = b_{w}$;
	\item Upon restricting $\brkt{\cdot}{\cdot}$ to $L_w \times L_{\bar{w}}$, the annihilator of $L_w^\pm$ is $L_{\bar{w}}^{\pm}$. Hence, one has a perfect pairing $L_w^+ \oplus L_{\bar{w}}^- \to \ZZ_p(1)$, again denoted $\brkt{\cdot}{\cdot}$.
\end{enumerate}

Fix dual $\OO_w$-bases (with respect to the perfect pairing above) for $L_w^+$ and $L_{\bar{w}}^-$. They yield identifications
\begin{equation} \label{GL(ei Lw) basis}
    \begin{tikzcd}
            \GL_{\OO_w}(L_w^+) 
                \arrow[r, "\cong"] 
        &
            \GL_{a_{w}}(\OO_w) 
        &
            \GL_{b_{\bar{w}}}(\OO_{\bar{w}}) 
                \arrow[r, "\cong"] 
        &
            \GL_{\OO_w}(L_{\bar{w}}^-)
    \end{tikzcd}
\end{equation}
as well as an isomorphism $\GL_{\OO_w}(L_w) \cong \GL_{n}(\OO_w)$ such that the obvious map
\[
	\GL_{\OO_w}(L_w^+) \times \GL_{\OO_w}(L_w^-) \hookrightarrow \GL_{\OO_w}(L_w)
\]
is simply the diagonal embedding of block matrices.

Let $L^\pm = \prod_{w \mid p} L_w^\pm$ and set $H := \GL_{\OO \otimes \ZZ_p}(L^+)$. Then, the identification \eqref{GL(ei Lw) basis} above induces a canonical isomorphism
\begin{equation} \label{def H}
    H \cong
    \prod_{w \mid p}
        \GL_{a_{w}}(\OO_w) = 
    \prod_{w \in \Sigma_p}
        \GL_{a_{w}}(\OO_w) \times 
        \GL_{b_{w}}(\OO_w)
\end{equation}

\subsubsection{Parabolic subgroups of $G$ over $\ZZ_p$.} \label{level at p}
For $w \mid p$, let 
\[
	\d_{w} = \left( n_{w,1}, \ldots, n_{w, t_{w}} \right)
\]
be a partition of $a_{w} = b_{\bar{w}}$. Let $P_{\d_{w}} \subset \GL_{a_{w}}(\OO_w)$ denote the standard parabolic subgroup corresponding to $\d_{w}$. Define $P_H \subset H$ as the $\ZZ_p$-parabolic that corresponds to the products of all the $P_{\d_{w}}$ via the isomorphism \eqref{def H}. We denote the unipotent radical of $P_H$ by $P_H^u$.

We identify the elements of the Levi factor $L_H = P_H / P_H^u$ of $P_H$ with collections of block-diagonal matrices, with respect to the partitions $\d_{w}$, via \eqref{def H}. 

Let $P^+ \subset G_{/\ZZ_p}$ be the parabolic subgroup that stabilizes $L^+$ and such that
\begin{equation} \label{def P+}
    P^+ 
        \twoheadrightarrow 
    \Gm \times P_H \subset \Gm \times H
\end{equation}
where the map to the first factor is the similitude character $\nu$ and the map to the second factor is projection to $H$. 

For $w \in \Sigma_p$, let $P_{w}$ be the parabolic subgroup of $\GL_{\OO_w}(L_w)$ given by
\begin{equation} \label{local parabolics}
    P_{w} = 
    \left\{
        \begin{pmatrix}
		A & B \\
		0 & D
        \end{pmatrix} 
            \in \GL_{n}(\OO_w) \mid
                A \in P_{\d_w}, 
                D \in P^{\opp}_{\d_{\ol{w}}}
    \right\} \ ,
\end{equation}
via the isomorphism \eqref{GL(ei Lw) basis}.

We identify $P = \prod_{w \in \Sigma_p} P_{w}$ as a subgroup of $G_{/\ZZ_p}$ via \eqref{prod G over Zp}.

\begin{remark} \label{trivial partition}
    The trivial partition of $a_{w}$ is $(1, \ldots, 1)$ (of length $t_{w} = a_{w}$). If the partitions $\d_{w}$ and $\d_{\bar{w}}$ are both trivial, we write $B_{w}$ instead of $P_{w}$. In that case, $L_B$ is the maximal torus subgroup of $\GL_n(\OO_w)$.
\end{remark}

Our choices of bases above imply that under the isomorphisms \eqref{prod G over Zp} and \eqref{GL(ei Lw) basis}, $P^+$ corresponds to
\begin{equation}
	P^+ \xlongrightarrow{\sim} \Gm \times P \ .
\end{equation}

In particular, this induces a natural isomorphism $L_H \cong L_P := P/P^u$, where $P^u$ is the unipotent radical of $P$.

\begin{definition} \label{def PIwahori}
    We define the $P$-Iwahori subgroup of $G$ of level $r \geq 0$ as
    \[
        I_r^0 = I_{P,r}^0 :=
        \left\{
            g \in G(\ZZ_p) \mid 
                g \text{ mod } p^r 
                \in P^+(\ZZ_p/p^r \ZZ_p)
        \right\}
    \]
    and the pro-$p$ $P$-Iwahori subgroup $I_r = I_{P,r}$ of $G$ of level $r$ as 
    \[
        I_r = I_{P,r} :=
        \left\{
            g \in G(\ZZ_p) \mid 
                g \text{ mod } p^r 
                \in (\ZZ_p/p^r\ZZ_p)^\times 
                \times P^u(\ZZ_p/p^r \ZZ_p)
        \right\}.
    \]
\end{definition}

\begin{remark}
    We refrain from referring to $I_r^0$ as a \emph{parahoric} subgroup of $G$. This terminology is usually reserved for stabilizers of points in Bruhat-Tits building. We make no attempt here to introduce our construction from the point of view of these combinatorial and geometric structures.
\end{remark}

The inclusion of $L_P(\ZZ_p)$ in $I_r^0$ yields a canonical isomorphism 
\begin{equation}
    L_P(\ZZ_p/p^r\ZZ_p) \xrightarrow{\sim} I_r^0/I_r \ .
\end{equation}

For each $w \in \Sigma_p$, one similarly defines $I_{w, r}^0$ and $I_{w, r}$ by replacing $P^+$ by $P_{w}$ and working in $\GL_{n}(\OO_w)$ instead of $G(\ZZ_p)$. Let
\[
    I_r^{\GL} = 
    \prod_{w \in \Sigma_p}
        I_{w, r} 
        \ \ \ \text{and} \ \ \ 
    I_r^{0,\GL} = 
    \prod_{w \in \Sigma_p} 
        I_{w, r}^0 \ ,
\]
so that $I_{r}$ and $I_{r}^0$ correspond to $\ZZ_p^\times \times I_{P,r}^{\GL}$ and $\ZZ_p^\times \times I_{P,r}^{0, \GL}$ respectively, via the isomorphisms \eqref{prod G over Zp} and \eqref{GL(ei Lw) basis}.

\subsubsection{Opposite unitary groups} \label{def G1 and G2}
In Section \ref{conv test vectors} below, we work with automorphic representations of $G(\AA)$ as well as automorphic representations on the opposite unitary group. We recall this notion and the notation of \cite[Section 3]{EHLS} relevant for our purpose.

Let
$
    \PP_1 := \PP = 
        (
            \KK, c, \OO, L, 
            2\pi\sqrt{-1}\brkt{\cdot}{\cdot}, h
        )
$ 
be the PEL datum constructed in Section \ref{PEL unitary} and define
$
    \PP_2 := 
        (
            \KK, c, \OO, L, 
            -2\pi\sqrt{-1}\brkt{\cdot}{\cdot}, h(\bar{\cdot})
        )
$. In particular, the signature at $w \in \Sigma_p$ of $\PP_2$ is $(b_w, a_w) = (a_{\ol{w}}, b_{\ol{w}})$. We typically write $G_i$ for the unitary group corresponding to $\PP_i$ when we want to distinguish the underlying PEL datum.

Although there is a canonical identification $G_1(\AA) = G_2(\AA)$, we distinguish their level structure at $p$. Namely, if $L_i$ is the lattice associated to $\PP_i$, we always fix an $\OO \otimes \ZZ_p$-decomposition $L_i \otimes \ZZ_p = L_i^+ \oplus L_i^-$. For $i=1$, we choose $L_1^\pm = L^\pm$, using the notation of Section \ref{G(Zp) to GLn}. For $i=2$, we instead set $L_2^\pm = L^\mp$. We have a corresponding factorization $L_i^\pm = \prod_{w \mid p} L_{i, w}^\pm$. Effectively, for each $w \in \Sigma_p$, this provides two distinct identifications of $\GL_{\OO_w}(L_{i,w})$ with $\GL_n(\OO_w)$ via \eqref{GL(ei Lw) basis}.

However, we prefer to only work with this identification for $i=1$. In other words, for $i=1,2$, let $P_{i, w}$ be the local parabolic subgroup of $\GL_{\OO_w}(L_{i,w})$ defined in Equation \eqref{local parabolics}. Then, via the identification \eqref{GL(ei Lw) basis} for $i=1$, the parabolic subgroup $P_{2,w}$ of $\GL_n(\OO_w)$ corresponds to $\tp{P}_{1,w}$. Therefore, in what follows we always work with $P_w := P_{1,w}$ when consider $G_1$ (or equivalently, the PEL datum $\PP_1$) and with $\tp{P}_w$ when considering $G_2$.

Similarly, the local Iwahori subgroups of level $r$ defined below Definition \ref{def PIwahori} are $I_{w, r} \subset I_{w, r}^0$ when working with $G_1$ and their transpose when working with $G_2$.
\subsection{Conventions on test vectors.} \label{conv test vectors}
In this section, we make several choices of compatible $P$-anti-ordinary vectors in automorphic representations, using the theory of Bushnell-Kutzko types \cite{BusKut98} and the main results of \cite{Mar23a}.

\subsubsection{Local representations over CM type at $p$.}
Let $\pi$ be a cuspidal (irreducible) automorphic representation of $G(\AA)$. By this, we mean an irreducible $(\g, U_\infty) \times G(\AA_f)$-submodule of the space of cuspidal automorphic forms on $G(\AA)$, where $\g = \Lie(G(\RR))_\CC$ and $U_\infty \subset G(\RR)$ is the stabilizer of the morphism $h$ provided in the PEL datum $\PP$ associated to $G$.

For each rational prime $l$, write $G_l := G(\QQ_l)$. Consider the restricted products $G(\AA_f) = \prod_l' G_l$ and $G(\AA) = G_\infty \times \prod_l' G_l$, and let $\pi_l$ be the $l$-constituent of $\pi$.

Let $S$ be a finite set of places of $\QQ$ containing all primes such that $\pi_l$ is ramified and assume that $p \notin S$. For each $l \notin S \cup \{p\}$, fix a nonzero unramified vector $\varphi_{l,0} \in \pi_l$. Correspondingly, fix isomorphisms
\begin{equation} \label{facto pi}
    \pi 
        \xrightarrow{\sim} 
            \pi_\infty \otimes \pi_f 
        \ \ \ ; \ \ \ 
    \pi_f
        \xrightarrow{\sim} 
            \pi^{S, p} \otimes 
            \pi_p \otimes
            \pi_{S} \ ,
\end{equation}
where
\[
    \pi_S \cong \bigotimes_{l \in S} \pi_l
        \ \ \ ; \ \ \
    \pi^{S, p} \cong \bigotimes_{l \notin S \cup \{p\}} \pi_l
\]
and the second factorization is a restricted tensor products with respect to our choice of unramified vectors $\varphi_{l,0}$. 

Let $G_w := \GL_n(\KK_w)$. The isomorphisms \eqref{prod G over Zp} and \eqref{GL(ei Lw) basis} induce an identification
\begin{equation} \label{facto pi p}
    \pi_p \cong \mu_p \otimes \left( \bigotimes_{w \in \Sigma_p} \pi_w \right) \ ,
\end{equation}
for some character $\mu_p$ of $\QQ_p$ and irreducible admissible representation $\pi_w$ of $G_w$. 

Now, assume there exists an integer $r \gg 0$ such that $\pi_p^{I_{P, r}} \neq 0$. Equivalently, assume $\mu_p$ is unramified and
\[
    \pi_p^{I_{P, r}} \cong \bigotimes_{w \in \Sigma_p} \pi_w^{I_{w, r}} \neq 0 \ .
\]

Let $\cg{\pi}$ denote the contragredient of $\pi$. Since it is a twist of the complex conjugate of $\pi$, it is also a cuspidal automorphic representation of $G(\AA)$. We fix similar conventions for $\cg{\pi}$. In particular, we have
\begin{equation} \label{facto cg pi p}
    \cg{\pi}_p \cong \mu_p^{-1} \otimes \left( \bigotimes_{w \in \Sigma_p} \cg{\pi}_w \right)
\end{equation}
and $\left( \cg{\pi}_w \right)^{\tp{I_{w, r}}} \neq 0$ for each $w \in \Sigma_p$. Here, $\cg{(\cdot)}$ is again used to denote contragredient representations. We keep this convention throughout the rest of this article.

In what follows, we choose specific ``test'' vectors in $\pi_p$ and $\cg{\pi}_p$ using notions and results from \cite{Mar23a}. Then, in Section \ref{SW section at p}, we construct a local Siegel-Weil section so that particular $p$-adic zeta integrals defined in Section \ref{zeta integrals} can be related to $p$-Euler factors of standard automorphic $L$-functions.

\subsubsection{Compatibility of parabolic subgroups.} 
For each $w \in \Sigma_p$ and integer $d \geq 1$, let $G_w(d)$ denote the algebraic group $\GL(d)$ over $\OO_w = \OO_{\KK_w}$. However, when $d = n$, we still write $G_w$ instead of $G_w(n)$. Let $(a_w, b_w)$ be the signature associated to $w \in \Sigma_p$, $L_1$ and $\langle \cdot, \cdot \rangle_1$, as in Section \ref{Hodge structure}. 

Proceeding as in Section \ref{level at p}, let $P_{a_w} \subset G_w(a_w)$, $P_{b_w} \subset G_w(b_w)$ and $P_{a_w, b_w} \subset G_w$ be the standard upper triangular parabolic subgroups associated to partitions
\[
    \d_{a_w} = 
        (n_{w, 1}, \ldots, n_{w, t_w}) 
    \ \ \ ; \ \ \ 
    \d_{b_w} = 
        (n_{w, t_w+1}, \ldots, n_{w, r_w}) 
    \ \ \ ; \ \ \
    \d_{w} = 
        (a_w, b_w) 
\]
of $a_w$, $b_w$ and $n$, respectively. We also work with the parabolic subgroup $P_w \subset G_w$ constructed in Section \ref{level at p}. Note that $P_w \subset P_{a_w, b_w} \subset G_w$.

For any one of these parabolic subgroup $P_\bullet$, let $L_\bullet$ denote its standard Levi subgroup consisting of block-diagonal matrices (corresponding to the decomposition defining $P_\bullet$). Similarly, consider the pro-$p$ Iwahori subgroup $I_{\bullet, r}$ of level $r$ associated to $P_\bullet$ consisting of invertible matrices $g$ (of the appropriate size) over $\OO_w$ such that $g$ mod $\p^r$ is in $P_\bullet^u(\OO_w/\p^r\OO_w)$.

Let $K_\bullet = L_\bullet(\OO_w)$ and $I_{\bullet, r}^0 = K_\bullet I_{\bullet, r}$. Setting $K_{w,j} = \GL_{n_{w,j}}(\OO_w)$, we have
\[
    K_{a_w} = 
        \prod_{j=1}^{t_w}
            K_{w,j} \ \ \ ; \ \ \
    K_{b_w} = 
        \prod_{j=t_w+1}^{r_w}
            K_{w,j} \ \ \ ; \ \ \
    K_{w} = 
        K_{a_w} \times K_{b_w} \ ,
\]
where the products take place in $G_w(a_w)$, $G_w(b_w)$ and $G_w$, respectively.

\subsubsection{Compatibility of local representations} \label{compatibility sigmas}
Assume there exists an admissible irreducible representation $\sigma_w$ of $L_w$ such that $\pi_w$ is the unique irreducible quotient of $\ind{P_w}{G_w} \sigma_w$. Equivalently, $\cg{\pi}_w$ is the unique irreducible subrepresentation of $\ind{P_w}{G_w} \cg{\sigma}_w$.

\begin{remark}
    As explained in \cite[Section 4.1.1]{Mar23a}, an easy application of a theorem of Jacquet (see \cite[Lemma 3.6]{Mar23a} or \cite[Theorem 5.1.2]{Cas95}) implies that if $\pi_p$ is $P$-anti-ordinary, such admissible representations exist.
\end{remark}

Write $\sigma_w = \boxtimes_{j=1}^{r_w} \sigma_{w,j}$ and consider the representations  
\[
    \sigma_{a_w} = 
        \boxtimes_{j=1}^{t_w}
            \sigma_{w,j} \ \ \ ; \ \ \
    \sigma_{b_w} = 
        \boxtimes_{j=t_w+1}^{r_w}
            \sigma_{w,j}
\]
of $L_{a_w}$ and $L_{b_w}$. Let $\pi_{a_w}$ and $\pi_{b_w}$ be the unique irreducible quotients
\begin{equation} \label{def pi aw and pi bw}
    \ind{P_{a_w}}{G_w(a_w)} \sigma_{a_w} 
        \twoheadrightarrow \pi_{a_w}
    \ \ \ \text{and} \ \ \
    \ind{P^{\opp}_{b_w}}{G_w(b_w)} \sigma_{b_w} 
        \twoheadrightarrow \pi_{b_w} \ ,
\end{equation}
and set $\pi_{a_w, b_w} := \pi_{a_w} \boxtimes \pi_{b_w}$. Under the canonical isomorphism
\begin{equation}\label{iso induced rep}
    \ind{P_w}{G_w} \sigma_w 
        \xrightarrow{\sim}
    \ind{P_{a_w, b_w}}{G_w} 
    \left(
        \ind{
            P_{a_w} \times P_{b_w}^{\opp}
        }{
            G_w(a_w) \times G_w(b_w)
        }
        \sigma_{a_w} \boxtimes \sigma_{b_w}
    \right) \ ,
\end{equation}
given by $\phi \mapsto (g \mapsto (h \mapsto \phi(hg))$, $\pi_w$ is the unique irreducible quotient
\begin{equation} \label{pi aw x pi bw to pi w}
    \ind{P_{a_w,b_w}}{G_w}
    \left(
        \pi_{a_w, b_w} 
    \right)
        \twoheadrightarrow 
    \pi_w \ .
\end{equation}

\subsubsection{Conventions on pairings} \label{conv on pairings}
In this section, we set conventions on pairings between pairs of contragredient representations, as in \cite[Section 4.3.3]{EHLS}.

Let $\la \cdot, \cdot \ra_{\sigma_{w,j}}$ be the tautological pairing between $\sigma_{w,j}$ and its contragredient $\cg{\sigma}_{w,j}$. Then, define $(\cdot, \cdot)_{a_w} = \otimes_{i=1}^{t_w} \la \cdot, \cdot \ra_{\sigma_{w,j}}$ so that
\begin{align*}
    \la \cdot, \cdot \ra_{a_w} &: 
        \left( 
            \ind{P_{a_w}}{G_w(a_w)} 
                \sigma_{a_w}
        \right)
            \times 
        \left( 
            \ind{P_{a_w}}{G_w(a_w)} 
                \cg{\sigma}_{a_w}
        \right) 
    \to \CC \\
    \la 
        \varphi, 
        \cg{\varphi} 
    \ra_{a_w} 
    &= 
    \int_{K_{a_w}} 
        ( 
            \varphi(k), 
            \cg{\varphi}(k) 
        )_{a_w} 
    dk
\end{align*}
is the perfect $G_w(a_w)$-invariant pairing that identify the above pair as contragredient representations. A similar logic applies for $(\cdot, \cdot)_{b_w} = \otimes_{i=t_w+1}^{r_w} \la \cdot, \cdot \ra_{\sigma_{w,j}}$ and
\begin{align*}
    \la \cdot, \cdot \ra_{b_w} &: 
        \left( 
            \ind{P^{\opp}_{b_w}}{G_w(b_w)} 
                \sigma_{b_w}
        \right)
            \times 
        \left( 
            \ind{P^{\opp}_{b_w}}{G_w(b_w)} 
                \cg{\sigma}_{b_w}
        \right) 
    \to \CC \\
    \la 
        \varphi, 
        \cg{\varphi} 
    \ra_{b_w} 
    &= 
    \int_{K_{b_w}} 
        ( 
            \varphi(k), 
            \cg{\varphi}(k) 
        )_{b_w} 
    dk \ .
\end{align*}

Taking the dual of the surjections in Equation \eqref{def pi aw and pi bw} yields injections
\begin{equation} \label{def pi aw and pi bw dual}
    \cg{\pi}_{a_w} \hookrightarrow 
        \ind{P_{a_w}}{G(a_w)} 
            \cg{\sigma}_{a_w} 
    \ \ \ \text{ and } \ \ \ 
    \cg{\pi}_{b_w} \hookrightarrow
        \ind{P_{b_w}^{\opp}}{G(b_w)} 
            \cg{\sigma}_{b_w} 
\end{equation}
and restricting the second argument of $\brkt{\cdot}{\cdot}_{a_w}$ to $\cg{\pi}_{a_w}$ makes the first argument of the pairing factor through $\pi_{a_w}$. It is identified with the tautological pairing $\brkt{\cdot}{\cdot}_{\pi_{a_w}} : \pi_{a_w} \times \cg{\pi}_{a_w} \to \CC$. Again, a similar logic applies for $\brkt{\cdot}{\cdot}_{\pi_{b_w}} : \pi_{b_w} \times \cg{\pi}_{b_w} \to \CC$.

Let $(\cdot, \cdot)_w = \la \cdot, \cdot \ra_{\pi_{a_w}} \otimes \la \cdot, \cdot \ra_{\pi_{b_w}}$. As above, it determines a pairing
\[
    \la \cdot, \cdot \ra_w : 
        \ind{P_{a_w, b_w}}{G_w}  
            \left(
                \pi_{a_w, b_w}
            \right)
            \times 
        \ind{P_{a_w, b_w}}{G_w} 
            \left( 
                \cg{\pi}_{a_w, b_w}
            \right)
    \to \CC
\]
as well as a pairing $\brkt{\cdot}{\cdot}_w : \pi_w \times \cg{\pi}_w \to \CC$, using the dual 
$
    \cg{\pi}_w \hookrightarrow 
    \ind{P_{a_w, b_w}}{G_w} 
        \left( 
            \cg{\pi}_{a_w, b_w}
        \right)
$
induced from Equation \eqref{pi aw x pi bw to pi w}.

For any $\phi \in \pi_w$, $\cg{\phi} \in \cg{\pi}_w$, if $\varphi$ is a lift of $\phi$ and $\cg{\varphi}$ is the image of $\cg{\phi}$, then
\begin{equation} \label{inner product pi w integral}
    \brkt{\phi}{\phi}_{\pi_w} =
    \int_{\GL_n(\OO_w)}
        \left(
            \varphi(k),
            \cg{\varphi}(k)
        \right)_w
    dk
\end{equation}

\subsubsection{Compatibility of test vectors.} \label{comp test vectors}
Fix two characters $\chi_{w,1}, \chi_{w,2} : \KK_w^\times \to \CC^\times$. Choose any integer $r$ such that
\begin{equation} \label{large enough r}
	r \geq 
	\max
	(
		1, 
		\ord_w(\cond(\chi_{w,1})) , 
		\ord_w(\cond(\chi_{w,2}))
	)
\end{equation}

In what follows, we consider $\chi_{w,1}$ and $\chi_{w,2}$ as characters of general linear groups of any rank via composition with the determinant without comment.

Furthermore, for each $1 \leq j \leq r_w$, let $\tau_{w,j}$ be a smooth (finite-dimensional) irreducible representation of $K_{w,j}$. We assume that $r$ is large enough so that $\tau_{w,j}$ factors through $\GL_{n_{w,j}}(\OO_w/\p_w^r\OO_w)$. Assume there exists an embedding $\alpha_{w,j}$ of $\tau_{w,j}$ in the restriction of $\sigma_{w,j}$ as a representation of $K_{w,j}$. 

Let $\alpha_{a_w} : \tau_{a_w} \to \sigma_{a_w}$ and $\alpha_{b_w} : \tau_{b_w} \to \sigma_{b_w}$ be the corresponding embeddings over $K_{a_w}$ and $K_{b_w}$ respectively, where
\[
    \tau_{a_w} = 
        \boxtimes_{j=1}^{t_w}
            \tau_{w,j} \ \ \ ; \ \ \
    \tau_{b_w} = 
        \boxtimes_{j=t_w+1}^{r_w}
            \tau_{w,j} \ .
\]

\begin{remark} \label{tau BK type sigma}
    Implicitly, we think of $\tau_{a_w}$ as a Bushnell-Kutzko type for the $\sigma_{a_w}$ in the sense of \cite{BusKut98}. In \cite[Section 1.2.2]{Mar23a}, we canonically associate such a representation $\tau_{a_w}$ to $\pi_{a_w}$ (called the \emph{BK-type} of $\pi_{a_w}$), assuming that $\sigma_{a_w}$ is supercuspidal, using a uniqueness result of \cite{Pas05}. In that case, there exists a unique such an embedding $\alpha_{w,j}$ (up to scalar) and \cite{Mar23a} is concerned about constructing canonical lifts of $\alpha_{a_w}$ to an embedding of $\tau_{a_w}$ into $\pi_{a_w}$. In later projects, the author plans to associated such BK-types to any $\pi_{a_w}$ without assuming that $\sigma_{a_w}$ is supercuspidal, using the theory of covers developed in \cite{BusKut98, BusKut99}. Note that similar statements can be made about $\tau_{b_w}$ and $\tau_w := \tau_{a_w} \boxtimes \tau_{b_w}$.
\end{remark}

For each $j = 1, \ldots, r_w$, fix a vector $\phi_{w,j}$ in the image of $\alpha_{w,j}$ and consider
\[
    \phi_{a_w}^0 := 
        \bigotimes \limits_{j=1}^{t_w}
            \phi_{w,j}
            \ \ \ ; \ \ \
    \phi_{b_w}^0 := 
        \bigotimes \limits_{j=t_w+1}^{r_w}
            \phi_{w,j}
\]
as vectors in the image of $\alpha_{a_w}$ and $\alpha_{b_w}$ respectively.

Let $\varphi_{a_w} \in \ind{P_{a_w}}{G_w(a_w)} \sigma_{a_w}$ be the unique function fixed by $I_{a_w, r}$ that has support $P_{a_w} I_{a_w, r}$ and
\begin{equation} \label{def varphi aw}
    \varphi_{a_w}(\gamma) = \sigma_{a_w}(\gamma) \phi_{a_w}^0 = \tau_{a_w}(\gamma) \phi_{a_w}^0 \ ,
\end{equation}
for all $\gamma \in I_{a_w, r}^0$. Denote its image in $\pi_{a_w}$ by $\phi_{a_w}$. 

\begin{remark} \label{covers of types to P-Iwahori}
    Here, we implicitly identify $\tau_{a_w}$ with its image in $\sigma_{a_w}$ and as a representation of $I_{a_w, r}^0$ that factors through $I_{a_w, r}^0/I_{a_w, r} \cong L_{a_w}(\OO_w/\p_w^r\OO_w)$. In what follows, we similarly identify $\tau_{b_w}$ (resp. $\cg{\tau}_{a_w}$, $\cg{\tau}_{b_w}$) with its cover as a representation of $\tp{I}_{b_w, r}^0$ (resp. $\tp{I}_{a_w, r}^0$, ${I}_{b_w, r}^0$) contained in $\sigma_{b_w}$ (resp. $\cg{\sigma}_{a_w}$, $\cg{\sigma}_{b_w}$).
\end{remark}

Let $\varphi_{b_w} \in \ind{P^{\opp}_{b_w}}{G_w(b_w)} \sigma_{b_w}$ be the unique function whose support is $P_{b_w}^{\opp} \tp{I}_{b_w, r}$ such that
\begin{equation} \label{def varphi bw}
    \varphi_{b_w}(\gamma) = \tau_{b_w}(\gamma) \phi_{b_w}^0 \ ,
\end{equation}
for all $\gamma \in \tp{I}_{b_w, r}^0$. Let $\phi_{b_w}$ denote its image in $\pi_{b_w}$. 

Lastly, consider the unique function $\varphi_w \in \ind{P_w}{G_w} \sigma_w$ fixed by $I_{w,r}$ whose support is $P_wI_{w,r}$ and
\begin{equation}\label{def varphi w}
    \varphi_w(\gamma) = \tau_w(\gamma) (\phi_{a_w}^0 \otimes \phi_{b_w}^0) \ ,
\end{equation} 
for all $\gamma \in I_{w,r}^0$, where $\tau_w = \tau_{a_w} \boxtimes \tau_{b_w}$. Here, we identify $\tau_w$ with its cover from $K_w$ to $I_{w,r}^0$, as in Remark \ref{covers of types to P-Iwahori}.

For our purposes, it is more convenient to work with the image of $\varphi_w$ via the map 
$
    \ind{P_w}{G_w} \sigma_w
        \to 
    \ind{P_{a_w,b_w}}{G_w} \pi_{a_w, b_w}
$ induced by the maps in \eqref{def pi aw and pi bw} and \eqref{iso induced rep}. We denote this image by $\varphi_w$ again, which should not cause any confusion since we will only ever work with $\varphi_w$ in $\ind{P_{a_w,b_w}}{G_w} \pi_{a_w, b_w}$ from now on. 

One easily checks that the support of $\varphi_w$ is $P_{a_w,b_w}I_{w,r}$ and
\[
    \varphi_w(\gamma) = \tau_w(\gamma) (\phi_{a_w} \otimes \phi_{b_w}) \ ,
\]
for all $\gamma \in I_{w,r}^0$. Let $\phi_w$ be the image of $\varphi_w$ in $\pi_w$.

\begin{remark} \label{If pi Paord}
    If $\pi$ is $P$-anti-ordinary of level $r \gg 0$ at $p$ as a representation of $G_1(\AA)$, or equivalently $\pi_w$ is $P_w$-anti-ordinary, then $\phi_{a_w}$ (resp. $\phi_{b_w}$, $\phi_w$) is a $P_{a_w}$-anti-ordinary (resp. $\tp{P}_{b_w}$-anti-ordinary, $P_w$-anti-ordinary) vector of level $r$ and type $\tau_{a_w}$ (resp. $\tau_{b_w}$, $\tau_w$) as in \cite[Theorem 4.3]{Mar23a}. The precise definitions of these notions will not play a role in the rest of this article, so we omit them. See \cite[Section 4.1]{Mar23a} for more details.
\end{remark}

The following lemma is trivial since $I_{w,r}$ is normalized by $G_w(a_w) \times G_w(b_w)$ (the product taking place in $G_w$) but we include it here since it is used several times implicitly in the computations of Section \ref{comp above p}.

\begin{lemma} \label{Iwahori fixed}
	For any $g \in G_w(a_w) \times G_w(b_w)$, $\pi_w(g)\varphi_w$ is fixed by $I_{w,r}$.
\end{lemma}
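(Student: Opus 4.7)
The plan is to reduce the claim to the normalization statement announced in the sentence immediately preceding the lemma. For any $k \in I_{w,r}$ and $g \in G_w$, the representation identity
\[
    \pi_w(k)\pi_w(g)\varphi_w
        =
    \pi_w(g)\pi_w(g^{-1}kg)\varphi_w
\]
shows that $\pi_w(g)\varphi_w$ is fixed by $I_{w,r}$ as soon as two ingredients hold: first, $\varphi_w$ itself is fixed under right translation by $I_{w,r}$; second, conjugation by $g$ sends $I_{w,r}$ into itself.

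The first ingredient is immediate from the construction of $\varphi_w$. The defining formula \eqref{def varphi w} makes $\varphi_w$ right-$I_{w,r}$-invariant in $\ind{P_w}{G_w}\sigma_w$: the specified values on $I_{w,r}^0$ are given by the action of $\tau_w$, and $\tau_w$ factors through the finite quotient $I_{w,r}^0/I_{w,r} \cong L_P(\OO_w/\p_w^r)$, hence acts trivially on $I_{w,r}$. This invariance persists under the natural map to $\ind{P_{a_w,b_w}}{G_w}\pi_{a_w,b_w}$ induced by \eqref{def pi aw and pi bw} and \eqref{iso induced rep}, in which we are now viewing $\varphi_w$.

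The second ingredient, that $g^{-1} I_{w,r} g = I_{w,r}$ for every $g \in G_w(a_w) \times G_w(b_w)$, is precisely the content of the parenthetical remark in the paper preceding the lemma and is the only substantive point — it is the main obstacle, though one that the paper advertises as essentially immediate. I would verify it by inspecting the block-matrix congruence conditions defining $I_{w,r}$ relative to the $(a_w, b_w)$-block decomposition coming from $P_{a_w, b_w}$: since any element of the Levi $L_{a_w, b_w} = G_w(a_w) \times G_w(b_w)$ of $P_{a_w, b_w}$ is block-diagonal with respect to this decomposition, conjugation by it respects the block structure, and the $P_w$-congruence condition mod $\p_w^r$ is stable under this action. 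Once this normalization is in hand, the conjugation identity above combined with the $I_{w,r}$-invariance of $\varphi_w$ gives $\pi_w(k)\pi_w(g)\varphi_w = \pi_w(g)\varphi_w$ for all $k \in I_{w,r}$, proving the lemma.
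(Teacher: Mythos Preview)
Your argument mirrors the paper's one-line justification exactly: reduce the lemma to the claim that $G_w(a_w)\times G_w(b_w)$ normalizes $I_{w,r}$, then invoke the $I_{w,r}$-invariance of $\varphi_w$. The reduction step is correct.

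The gap lies in the normalization claim itself, and it is a genuine one (present already in the paper's remark). Conjugation by a block-diagonal $g = \diag(A_0, D_0)$ does respect the $(a_w,b_w)$-block structure, but the defining congruence of $I_{w,r}$ is finer: the $(1,1)$-block must satisfy $\alpha \equiv P_{\d_w}^u \pmod{\p_w^r}$, and a generic $A_0 \in \GL_{a_w}(\OO_w)$ does not stabilize $P_{\d_w}^u$ under conjugation. Concretely, with $a_w = 2$ and $\d_w = (1,1)$, the permutation $A_0 = \left(\begin{smallmatrix}0&1\\1&0\end{smallmatrix}\right)$ conjugates $\left(\begin{smallmatrix}1&1\\0&1\end{smallmatrix}\right)$ to $\left(\begin{smallmatrix}1&0\\1&1\end{smallmatrix}\right)$, so $g^{-1}I_{w,r}g \neq I_{w,r}$. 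Worse, the lemma itself fails for this $g$ and $k = \diag\!\left(\left(\begin{smallmatrix}1&1\\0&1\end{smallmatrix}\right),1\right)$: evaluating at $x = g^{-1}$ gives $\varphi_w(g^{-1}kg) = \pi_{a_w}\!\left(\left(\begin{smallmatrix}1&0\\1&1\end{smallmatrix}\right)\right)\phi_{a_w} \otimes \phi_{b_w}$, and the $I_{a_w,r}$-fixed vector $\phi_{a_w}$ has no reason to be invariant under the opposite unipotent (it is not, when the underlying principal series is irreducible). In the actual applications in Section~\ref{comp above p} only very special $k$ arise --- those whose Levi component relative to $P_{a_w,b_w}$ lies in the principal congruence subgroup mod $\p_w^r$, or which lie in the unipotent pieces relative to $P_{a_w,b_w}$ --- and for those the conclusion can be checked directly; but neither the normalization nor the lemma holds in the stated generality.
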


We now proceed similarly by constructing explicit vectors related to $\cg{\pi}_w$. Since $\sigma_{w,j}$ is admissible, for $j = 1, \ldots, r_w$, we also have an embedding $\cg{\alpha}_{w,j} : \cg{\tau}_{w,j} \to \cg{\sigma}_{w,j}$ of $K_{w,j}$-representations. We identify the natural contragredient pairing on $\tau_{w,j} \times \cg{\tau}_{w,j}$ with the restriction of $\brkt{\cdot}{\cdot}_{\sigma_{w,j}}$ via their fixed embedding in $\sigma_{w,j} \times \cg{\sigma}_{w,j}$. 

\begin{remark} \label{cg tau BK type cg sigma}
    If $\tau_{w,j}$ is the BK-type of $\sigma_{w,j}$ as in Remark \ref{tau BK type sigma}, then $\cg{\tau}_{w,j}$ is also the BK-type of $\cg{\sigma}_{w,j}$. In that case, such maps $\cg{\alpha}_{w,j}$ again exist and are unique up to scalar.
\end{remark}

Fix a vector $\cg{\phi}_{w,j} \in \cg{\sigma}_{w,j}$ in the image of $\cg{\alpha}_{w,j}$ such that $\brkt{\phi_{w,j}}{\cg{\phi}_{w,j}}_{\sigma_{w,j}} = 1$ and define
\[
    \cg{\phi}_{a_w}^0 := 
        \bigotimes \limits_{j=1}^{t_w}
            \cg{\phi}_{w,j}
            \ \ \ ; \ \ \
    \cg{\phi}_{b_w}^0 := 
        \bigotimes \limits_{j=t_w+1}^{r_w}
            \cg{\phi}_{w,j}
\]
as vectors in $\cg{\sigma}_{a_w}$ and $\cg{\sigma}_{b_w}$ respectively.

Assume there exists a vector $\cg{\phi}_{a_w}$ in $\cg{\pi}_{a_w}$ fixed by $\tp{I}_{a_w,r}$ such that the support of its image $\cg{\varphi}_{a_w}$ in $\ind{P_{a_w}}{G_w(a_w)} \cg{\sigma}_{a_w}$ contains $P_{a_w} \tp{I}_{a_w, r}$ and that
\begin{equation} \label{def cg varphi aw}
    \cg{\varphi}_{a_w}(\gamma) 
    = 
        \cg{\tau}_{a_w}(\gamma) \cg{\phi}_{a_w}^0 \ , \ \forall \gamma \in \tp{I}^0_{a_w,r} \ .
\end{equation}

Similarly, assume there exists a vector $\cg{\phi}_{b_w}$ in $\cg{\pi}_{b_w}$ fixed by $I_{b_w,r}$ such that the support of its image $\cg{\varphi}_{b_w}$ in $\ind{P_{b_w}}{G_w(b_w)} \cg{\sigma}_{b_w}$ contains $P_{b_w} I_{b_w, r}$ and that
\begin{equation} \label{def cg varphi bw}
    \cg{\varphi}_{b_w}(\gamma) 
    = 
        \cg{\tau}_{b_w}(\gamma) \cg{\phi}_{b_w}^0 \ , \ \forall \gamma \in I^0_{b_w,r} \ .
\end{equation}

Lastly, assume there exists a vector $\cg{\phi}_w$ in $\cg{\pi}_w$ fixed by $\tp{I}_{w,r}$ such that the support of its image $\cg{\varphi}_w$ in $\ind{P_{a_w,b_w}}{G_w} \cg{\pi}_{a_w, b_w}$ contains $P_w \tp{I}_{w, r}$ and that
\begin{equation} \label{def cg varphi w}
    \cg{\varphi}_w(\gamma) 
    = 
        \cg{\tau}_w(\gamma) (\cg{\phi}_{a_w} \otimes \cg{\phi}_{b_w}) \ , \ \forall \gamma \in \tp{I}^0_{w,r} \ .
\end{equation}

\begin{remark} \label{If cg pi Paord}
    As in Remark \ref{If pi Paord}, assume that $\pi$ is $P$-anti-ordinary of level $r \gg 0$ at $p$ as a representation of $G_1(\AA)$, omitting the precise definition of this notions. Most importantly, if this holds $\cg{\pi}_w$ is a $P_w$-anti-ordinary of level $r \gg 0$ (as a local factor of an automorphic representation of $G_2(\AA)$) and \cite[Lemma 4.6 (ii)]{Mar23a}, proves the existence and uniqueness of vectors $\cg{\phi}_{a_w}$, $\cg{\phi}_{b_w}$ and $\cg{\phi}_w$. In the last case, one needs to use the isomorphism \eqref{iso induced rep} to compare \emph{loc. cit.} with our notation here.

    In particular, in that case $\cg{\phi}_{a_w}$ (resp. $\cg{\phi}_{b_w}$, $\cg{\phi}_w$) is $\tp{P}_{a_w}$-anti-ordinary (resp. $P_{b_w}$-anti-ordinary, $\tp{P}_w$-anti-ordinary) of type $\cg{\tau}_{a_w}$ (resp. $\cg{\tau}_{b_w}$, $\cg{\tau}_w$) in the sense of \cite[Sections 3.2 and 4.1]{Mar23a}.
\end{remark}

\subsubsection{Inner products between test vectors.}
Observe that the intersection of the support of $\varphi_w$ with $\GL_n(\OO_w)$ is $P_{a_w,b_w}I_{w,r} \cap \GL_n(\OO_w) = I^0_{a_w,b_w, r}$. Therefore,
\begin{align*}
    \la \phi_w, \cg{\phi}_w \ra_{\pi_w} 
    = 
    \int_{I^0_{a_w, b_w, r}} 
        (
            \varphi_w(k),
            \cg{\varphi}_w(k)
        )_{a_w,b_w} 
    d^\times k \ ,
\end{align*}

Write any $k \in I^0_{a_w, b_w, r}$ as
\[
    k = 
    \begin{pmatrix}
	1 & B \\
	0 & 1
    \end{pmatrix}
    \begin{pmatrix}
	A & 0 \\
	0 & D
    \end{pmatrix}
    \begin{pmatrix}
	1 & 0 \\
	C & 1
    \end{pmatrix} 
\]
where $A \in \GL_{a_w}(\OO_w)$, $D \in \GL_{b_w}(\OO_w)$, $B \in M_{a_w \times b_w}(\OO_w)$ and $C \in \p_w^rM_{b_w \times a_w}(\OO_w)$. Since
$
    \begin{pmatrix}
        1 & B \\ 0 & 1
    \end{pmatrix}
$ 
is in $P_{a_w, b_w}$ and 
$
    \begin{pmatrix}
        1 & 0 \\ C & 1
    \end{pmatrix}
$
is in both $I_{w,r}$ and $\tp{I}_{w,r}$, we see that
\[
    \varphi_w(k) = 
        \varphi_w
        \left( 
		\begin{pmatrix}
			A & 0 \\
			0 & D
		\end{pmatrix} 
	\right)
	= 
        \pi_{a_w}(A) \phi_{a_w} 
        \otimes \pi_{b_w}(D) \phi_{b_w}
\]
and
\[
    \cg{\varphi}_w(k) = 
	\cg{\varphi}_w
        \left(
		\begin{pmatrix}
			A & 0 \\
			0 & D
		\end{pmatrix}
	\right)
	= 
        \cg{\pi}_{a_w}(A) \cg{\phi}_{a_w} 
        \otimes \cg{\pi}_{b_w}(D) \cg{\phi}_{b_w} \ ,
\]
so we obtain
\begin{equation} \label{size varphi}
    \la 
        \phi_w, 
        \cg{\phi}_w 
    \ra_{\pi_w} 
        = 
    \vol(I^0_{a_w,b_w,r}) 
    \la 
            \phi_{a_w}, 
            \cg{\phi}_{a_w}
        \ra_{\pi_{a_w}} 
    \la 
            \phi_{b_w}, 
            \cg{\phi}_{b_w}
        \ra_{\pi_{b_w}} \ .
\end{equation}

Similar arguments yield
\begin{equation} \label{size phia}
    \la 
        \phi_{a_w}, 
        \cg{\phi}_{a_w} 
    \ra_{\pi_{a_w}} = 
	\vol(I^0_{P_{a_w}, r})
	( 
            \phi^0_{a_w}, 
            \cg{\phi}^0_{a_w}
        )_{{a_w}}= 
	\vol(I^0_{P_{a_w}, r})
\end{equation}
and
\begin{equation} \label{size phib}
    \la 
        \phi_{b_w}, 
        \cg{\phi}_{b_w} 
    \ra_{\pi_{b_w}} = 
	\vol(I^0_{P_{b_w}, r})
	( 
            \phi^0_{b_w},
            \cg{\phi}^0_{b_w} 
        )_{{b_w}} = 
	\vol(I^0_{P_{b_w}, r}) \ ,
\end{equation}
using the fact that $(\phi^0_{w, j}, \cg{\phi}^0_{w, j}) = 1$ for each $1 \leq j \leq r_w$. Ultimately, we obtain
\begin{equation} \label{size phi}
    \la 
        \phi_w, 
        \cg{\phi}_w 
    \ra_{\pi_w} 
    = 
        \vol(I^0_{a_w,b_w,r}) 
        \vol(I^0_{P_{a_w}, r})
        \vol(I^0_{P_{b_w}, r})
    =
        \vol(I^0_{w,r}) \ ,
\end{equation}
which in particular is nonzero.

\section{Local Siegel-Weil sections associated to types.} 
\label{doubling method}
For $i=1,2$, let $\PP_i = (\KK, c, \OO, L_i, \la \cdot, \cdot \ra_i, h_i)$, $L_i \otimes \ZZ_p = L_i^+ \oplus L_i^-$ be the PEL datum described in Section \ref{def G1 and G2}. Furthermore, for $i=3,4$, define similar PEL datum $\PP_i = (\KK, c, \OO, L_i, \la \cdot, \cdot \ra_i, h_i)$, $L_i \otimes \ZZ_p = L_i^+ \oplus L_i^-$, again of unitary type in the sense of \cite[Section 2.2]{EHLS}, where
\begin{align*}
    \PP_3 &:= (\KK \times \KK, c \times c, \OO \times \OO, L_1 \oplus L_2, \la \cdot, \cdot \ra_1 \oplus \la \cdot, \cdot \ra_2, h_1 \oplus h_2), L_3^\pm := L_1^\pm \oplus L_2^\pm \\
    \PP_4 &:= (\KK, c, \OO, L_3, \la \cdot, \cdot \ra_3, h_3), L_4^\pm := L_3^\pm
\end{align*}

Let $G_i$ denote the similitude unitary group in \eqref{def sim uni gp} associated to $\PP_i$. Similarly, let $\nu_i$ be the similitude character of $G_i$ and $U_i = \ker \nu_i$. Using this notation, we recall the doubling method of \cite{Gar84, GPSR87} on unitary groups, following standard approach described in \cite[Section 4.1]{EHLS}.

\subsection{Siegel Eisenstein series and the doubling method.}
Given any number field $F/\QQ$, we write $\absv{\cdot}_F$ for the standard absolute value on $\AA^\times_F$ (instead of $\absv{\cdot}_{\AA_F}$). Moreover, in this section, we write $\AA$ for $\AA_{\QQ}$ and we write $G$ for $G_4$.

\subsubsection{Siegel parabolic.}
Let $W = V \oplus V$, equipped with $\brkt{\cdot}{\cdot}_W := \brkt{\cdot}{\cdot}_V \oplus (- \brkt{\cdot}{\cdot}_V)$, be the Hermitian vector space associated to $G_4$. Consider the subspaces $V^d = \{(x,x) \in W : x \in V\}$ and $V_d = \{(x,-x) \in W : x \in V\}$. We identify each of them with $V$ via projection on the first factor. The direct sum $W = V_d \oplus V^d$ is a polarization of $\brkt{\cdot}{\cdot}_W$.

Let $P_\sgl \subset G$ denote the stabilizer of $V^d$ under the right-action of $G$, a maximal $\QQ$-parabolic subgroup. Let $M \subset P_\sgl$ denote the Levi subgroup that also stabilizes $V_d$. The unipotent radical of $P_\sgl$ is the subgroup $N$ that fixes both $V^d$ and $W/V^d$ and clearly, $P_{\sgl}/N \cong M$. Furthermore, there is a canonical identification $M \xrightarrow{\sim} \GL_{\KK}(V) \times \Gm$ via $m \mapsto (\Delta(m), \nu(m))$, where $\Delta$ is the projection
\[
     P_\sgl \to \GL_\KK(V^d) = \GL_\KK(V) \ ,
\]
whose inverse is given by 
$
    (A, \lambda) 
        \mapsto 
    \diag(\lambda (A^*)^{-1}, A)
$, where $A^* = \tp{A}^c$. 

\subsubsection{Induced Representations.} \label{siegel induced rep}
Let $\chi : \KK^\times \backslash \AA_{\KK}^\times \to \CC^\times$ be a unitary Hecke character. It factors as $\chi = \bigotimes \limits_{w} \chi_w$, where $w$ runs over all places of $\KK$.

For convenience, define the character $\nabla$ of $P_\sgl(\AA)$ as
\[
    \nabla(-) = 
    \absv{ 
        \nm_{\KK/\KK^+} \circ 
        \det \circ 
        \Delta(-)
    }_{\KK^+} \cdot
    \absv{
        \nu(-)
    }_{\KK^+}^{-n} = 
    \absv{ 
        \det \circ 
        \Delta(-)
    }_\KK \cdot
    \absv{
        \nu(-)
    }_\KK^{-n/2} \ ,
\]
where $\nm_{\KK/E}$ is the usual norm homomorphism $\AA_{\KK} \to \AA_E$. One readily checks that $G_1(\AA)$, via its natural diagonal inclusion in $G_4(\AA)$, is in the kernel of $\nabla$. Moreover, the modulus character $\delta_{\sgl}$ of $P_\sgl(\AA_\QQ)$ equals $\nabla^{n}$.

Let $s \in \CC$, and define the smooth and normalized induction
\begin{align*}
    I(\chi, s) 
    &= 
    \ind{P_\sgl(\AA)}{G(\AA)}
        \left(
            \chi
            \left(
                \det \circ \Delta(-)
            \right)
                \cdot
            \nabla(-)^{-s}
        \right) \ .
\end{align*}

This degenerate principal series is identical to the one in \cite[Section 4.1.2]{EHLS}. It is also equal to the smooth, unnormalized parabolic induction
\[
    I(\chi, s) =
    \Ind_{P_\sgl(\AA)}^{G(\AA)}
        \left(
            \chi
            \left(
                \det \circ \Delta(-)
            \right) 
                \cdot
            \nabla(-)^{\frac{n}{2}-s}
        \right)
\]
and factors as a restricted tensor product of local induced representations
\[
    I(\chi, s) = \bigotimes_{v} I_v(\chi_v, s) \ ,
\]
where $v$ runs over all places of $\QQ$ and $\chi_v = \bigotimes \limits_{w | v} \chi_w$.

\subsubsection{Siegel Eisenstein series.} \label{Sgl Eis series}
Given a section $f = f_{\chi, s}$ of $I(\chi, s)$, one constructs the \emph{standard (nonnormalized)} Eisenstein series
\begin{equation} \label{def Eis series}
    E_f(g) = 
        \sum_{
            \gamma \in P(\QQ) \backslash G(\QQ)
        } 
        f(\gamma g)
\end{equation}
as a function on $G(\AA)$. It converges on the half-plane Re$(s) > n/2$ and if $f$ is right-$K$-finite, for some maximal compact open subgroup $K \subset G$, it admits a meromorphic continuation on $\CC$.

\subsubsection{Zeta integrals.} \label{zeta integrals}
Let $f = f_{\chi, s} \in I(\chi, s)$. Fix any $\varphi \in \pi$ and $\cg{\varphi} \in \cg{\pi}$ and consider the corresponding Rankin-Selberg zeta integral
\[
    I(\varphi, \cg{\varphi}, f; \chi, s) := 
    \int_{
        Z_3(\AA)G_3(\QQ) \backslash G_3(\AA)
    } 
        E_f(g_1, g_2)
        \varphi(g_1)
        \cg{\varphi}(g_2)
        \chi^{-1}(\det g_2)
    d(g_1,g_2)
\]

The arguments of \cite{GPSR87} show that for Re$(s)$ large enough,
\[
    I(\varphi, \cg{\varphi}, f; \chi, s) 
        = 
    \int_{U_1(\AA)} 
        f_{\chi, s}(u, 1)
        \brkt{
            \pi(u) \varphi
        }{
            \cg{\varphi}
        }_\pi
    du 
\]

Assume that $f = \otimes_v f_v$, as the product runs over all places $v$ of $\QQ$, for some local Siegel-Weil sections $f_v = f_{\chi, s, v} \in I_v(\chi_v, s)$. Furthermore, assume $\varphi$ and $\cg{\varphi}$ are ``pure tensors'', i.e. $\varphi = \otimes_v \varphi_v$ and $\cg{\varphi} = \otimes_v \cg{\varphi}_v$ according to the factorization \eqref{facto pi}. 

Then
\[  
    I(\varphi, \cg{\varphi}, f; \chi, s) 
        = 
    \prod_v
        I_v(\varphi_v, \cg{\varphi}_v, f_v; \chi_v, s) \cdot  
        \brkt{\varphi}{\cg{\varphi}}
\]
where
\begin{equation} \label{def local zeta integrals}
    I_v(\varphi_v, \cg{\varphi}_v, f_v; \chi_v, s) 
        =
    \frac{
        \int_{U_{1,v}} 
            f_{\chi, s, v}(u, 1)
            \brkt{
                \pi_v(u) \varphi_v
            }{
                \cg{\varphi}_v
            }_{\pi_v}
        du
    }{
        \brkt{\varphi_v}{\cg{\varphi}_v}_{\pi_v}
    }
\end{equation}
for any place $v$ of $\QQ$. Let $Z_v$ denote the numerator of the fraction above. In Section \ref{comp above p}, we compute the $p$-adic zeta integral $Z_p$ associated to a specific choice of Siegel-Weil section constructed in the next section and specific test vectors $\varphi_p$, $\cg{\varphi}_p$.
\subsection{Choice of Siegel-Weil sections at $p$} \label{SW section at p}
For each places $w \in \Sigma_p$ of $\KK$, fix an isomorphism $\KK_w = \KK_{\overline{w}}$. Then, the identification \eqref{prod G over Zp} for $G_4$ induces an identification of $P_\sgl(\QQ_p)$ with $\QQ_p^\times \times \prod_{w \in \Sigma_p} P_n(\KK_w)$, where $P_n \subset \GL_{\KK}(W)$ is the parabolic subgroup stabilizing $V^d$.

Let $\chi_p = \otimes_{w \mid p} \chi_w$ and, given $s \in \CC$, view $\chi_p \cdot \absv{-}_p^{-s}$ as a character of $P_\sgl(\QQ_p)$. One readily checks that the restriction to $\prod_{w \in \Sigma_p} P_n(\KK_w)$ corresponds to the product over $w \in \Sigma_p$ of the characters $\psi_{w,s} : P_n(\KK_w) \to \CC^\times$ defined as
\[
    \psi_{w,s}
    \left( 
        \begin{pmatrix}
            A & B \\
            0 & D
        \end{pmatrix}
    \right)
        =
    \chi_w(\det D) \chi_{\ol{w}}(\det A^{-1}) \cdot \absv{\det A^{-1}D }_w^{-s} \ ,
\]
by writing element of $P_n$ according to the direct sum decomposition $W = V_d \oplus V^d$.

Let $W_w = W \otimes_{\KK} \KK_w$ and choose any $f_{w,s} \in \ind{P_n(\KK_w)}{\GL_{\KK_w}(W_w)} \psi_{w,s}$, for each $w \in \Sigma$. Then, it is clear that the section
\begin{equation} \label{fp and fws}
    f_p(g) = f_{p, \chi, s}(g) := 
        \absv{\nu}_p^{(s - \frac{n}{2})\frac{n}{2}} 
        \prod_{w \in \Sigma_p} f_{w,s}(g_w) \ , \ \ \ g = (\nu, (g_w)_w) \in G(\QQ_p)
\end{equation}
is in $I_p(\chi_p, s)$.

\begin{remark}
    The strategy below is to construct such $f_{w,s}$ (and hence $f_p$) from specific Schwartz functions. This approach is already used in \cite[Section 2.2.8]{Eis15} and \cite[Section 4.3.1]{EHLS}. In fact, our argument owes a great deal to their work and the details they carefully provide. 
    
    The novelty here is that we associate Schwartz functions to finite dimensional representations (implicitly viewed as Bushnell-Kutzko types), instead of characters. This generalization is necessary when working with an automorphic representation $\pi$, as in Section \ref{conv test vectors}, with non-trivial supercuspidal support. This is the case when $\pi$ is $P$-ordinary (or $P$-anti-ordinary) but not ordinary (or anti-ordinary).
\end{remark}

\subsubsection{Locally Constant Matrix Coefficients.}
Let the characters $\chi_{w,1}$ and $\chi_{w,2}$ of Section \ref{conv test vectors} be $\chi_w$ and $\chi_{\overline{w}}^{-1}$ respectively. For the remainder of this section, we use the notation of Section \ref{conv test vectors}, where in particular $G = G_1$, freely.

Let $\mu'_{w,j} : K_{w,j} \to \CC$ be the matrix coefficient defined as
\[
    \mu'_{w,j}(X) = 
    \begin{cases}
        \langle 
            \phi_{w,j},
            \cg{\tau}_{w,j}(X)
            \cg{\phi}_{w,j} 
        \rangle_{\sigma_{w,j}}, & 
        \text{if $j=1, \ldots, t_w$}, \\
        \langle 
            \tau_{w,j}(X)
            \phi_{w,j},
            \cg{\phi}_{w,j} 
        \rangle_{\sigma_{w,j}}, & 
        \text{if $j=t_w+1, \ldots, r_w$}.
    \end{cases}
\]

The products $\mu'_{a_w} = \bigotimes_{j=1}^{t_w} \mu'_{w,j}$ and $\mu'_{b_w} = \bigotimes_{j=t_w+1}^{r_w} \mu'_{w,j}$ on $K_{a_w}$ and $K_{b_w}$, respectively, are the matrix coefficients
\[
    \mu'_{a_w}(X) = 
    (
        \phi_{a_w}^0, 
        \cg{\tau}_{a_w}(X) 
        \cg{\phi}_{a_w}^0 
    )_{a_w}
        \ \ \ ; \ \ \
    \mu'_{b_w}(X) = 
    ( 
        \tau_{b_w}(X)
        \phi_{b_w}^0, 
        \cg{\phi}_{b_w}^0
    )_{b_w}
\]
of $\cg{\tau}_{a_w}$ and $\tau_{b_w}$ respectively. 

We now consider $\mu'_{a_w}$ as a locally constant function on $M_{a_w}(\KK_w)$ supported on $\X_w^{(1)} := \tp{I}_{a_w, r}^0 I_{a_w, r}^0$. More precisely, one readily verifies that given $X \in \X_w^{(1)}$ and any $\tp{\gamma}_1, \gamma_2 \in I_{a_w, r}^0$ such that $X = \gamma_1\gamma_2$, then
\begin{equation} \label{def extn mu aw}
    \mu'_{a_w}(X) := 
    ( 
        \tau_{a_w}(\gamma_1^{-1})
        \phi_{a_w}^0,
        \cg{\tau}_{a_w}(\gamma_2)
        \cg{\phi}_{a_w}^0 
    )_{a_w}
\end{equation}
is well-defined. Similarly, we extend $\mu'_{b_w}$ to a locally constant function on $M_{b_w}(\KK_w)$ supported on $\X_w^{(4)} := \tp{I}_{b_w, r}^0 I_{b_w, r}^0$ via
\begin{equation} \label{def extn mu bw}
    \mu'_{b_w}(X) := 
    ( 
        \tau_{b_w}(\gamma_2)
        \phi_{b_w}^0,
        \cg{\tau}_{b_w}(\gamma_1^{-1})
        \cg{\phi}_{b_w}^0 
    )_{b_w},
\end{equation}
where $X \in \X_w^{(4)}$ and $\tp{\gamma}_1, \gamma_2 \in I_{b_w, r}^0$ are any elements such that $X = \gamma_1\gamma_2$.
\begin{remark}
    Note here that we are extending $\tau_{a_w}$ (resp. $\tau_{b_w}$, $\cg{\tau}_{a_w}$, $\cg{\tau}_{b_w}$) to a representation of $\tp{I}_{a_w, r}^0$ (resp. $\tp{I}_{b_w, r}^0$, $I_{a_w, r}^0$, $I_{b_w, r}^0$). Although this is the opposite of what is done in Remark \ref{covers of types to P-Iwahori}, this is exactly the necessary step to obtain the correct cancellation at the end of the proof of Theorem \ref{main thm - comp of Zw}.
\end{remark}

Let $\mu_{a_w}(A) := \chi_{2,w}^{-1}\mu'_{a_w}$ and $\mu_{b_w} := \chi_{1,w}\mu'_{b_w}$. Let $\X_w \subset M_n(\OO_w)$ be the set of matrices 
$
    \begin{pmatrix}
        A & B \\
        C & D
    \end{pmatrix}
$
such that $A \in \X_w^{(1)}$, $B \in M_{a_w \times b_w}(\OO_w)$, $C \in M_{b_w \times a_w}(\OO_w)$ and $D \in \X_w^{(4)}$. 

We define a locally constant function $\mu_w$ on $M_{n}(\KK_w)$ supported on $\X$ via
\begin{equation} \label{def ext mu w}
    \mu_w\left(
        \begin{pmatrix}
            A & B \\
            C & D
        \end{pmatrix}
    \right)
    = \mu_{a_w}(A)\mu_{b_w}(D) \ ,
\end{equation}
for all 
$
    \begin{pmatrix}
        A & B \\
        C & D
    \end{pmatrix}
    \in \X_w
$.

Observe that the set $\X_w$ contains the subgroup $\GG_w = \GG_w(r) \subset \GL_n(\OO_w)$ consisting of matrices whose terms below the $(n_{w,j} \times n_{w,j})$-blocks along the diagonal are in $p^r$ and such that the upper right $(a_w \times b_w)$-block is also in $p^r$. 

In particular, there is an obvious decomposition $\GG_w = \GG_l (I^0_{a_w,r} \times I^0_{b_w,r}) \GG_u$. By abuse of notation, given $B \in M_{a_w \times b_w}(\KK_w)$ or $C \in M_{b_w \times a_w}(\KK_w)$, we sometimes write $B \in \mathfrak{G}_u$ or $C \in \mathfrak{G}_l$ when we mean
\[
	\begin{pmatrix}
		1 & B \\
		0 & 1
	\end{pmatrix} \in \mathfrak{G}_u \ \ \ \ \ \text{or} \ \ \ \ \ 
	\begin{pmatrix}
		1 & 0 \\
		C & 1
	\end{pmatrix} \in \mathfrak{G}_l \ .
\]

\subsubsection{Choice of Schwartz functions.} \label{choice Schw func}
Let $\Phi_{1,w}: M_n(\KK_w) \to \CC$ be the locally constant function supported on $\GG_w$ such that 
\[
    \Phi_{1,w}(X) = 
    \mu_w(X)
\]
for all $X \in \GG_w$. Furthermore, define the locally constant functions
\begin{align*}
    \nu_{\bullet}(z) = 
        \chi_{w,1}^{-1} \chi_{w,2} 
        \mu_{\bullet}(z)
    \ \ \ ; \ \ \
    \phi_{\nu_{\bullet}}(z) = 
        \nu_{\bullet}(-z) \ ,
\end{align*}
where $\bullet$ denotes $a_w$, $b_w$ or $w$ and $z$ is in the appropriate domain.

Let $\Phi_{2,w} : M_n(\QQ_p) \to \CC$ be 
\[
	\Phi_{2,w}(x) = (\nu_w)^\wedge(x) = \int_{M_n(\KK_w)} \phi_{\nu_w}(y) e_w(\tr(yx)) dy
\]

\begin{remark}
    The definition of $\mu_w$ and its twist $\nu_w$ on $\X_w$ allows us to generalize the function denoted $\phi_{\nu_v}$ in \cite[Section 4.3.1]{EHLS}. In \emph{loc. cit.}, the BK-types are all characters, in which case $\tau_\bullet$ is equal to $\mu'_\bullet$. The ``telescoping product'' in the definition of $\phi_{\nu_v}$ is simply a formula that expresses the extension of these characters to $I_{\bullet}^0$ and $\tp{I}_{\bullet}^0$ simultaneously. Our alternative is to use extensions of (matrix coefficients of) BK-types such as in Equations \eqref{def extn mu aw} and \eqref{def ext mu w}.
\end{remark}

\begin{remark}
	This Fourier transform in the definition of $\Phi_{2,w}$ is slightly different than the one in \cite[Section 2.2.8]{Eis15} and \cite[Section 4.3.1]{EHLS}. It is the same as the one involved in the Godement-Jacquet functional equation \cite{Jac79}.
\end{remark}

\begin{lemma} \label{Phi2 support}
    Given 
    $X = 
        \begin{pmatrix} 
            A & B \\ 
            C & D 
        \end{pmatrix}$ 
    with $A \in M_{a_w \times a_w}(\KK_w)$, $B, \tp{C} \in M_{a_w \times b_w}(\KK_w)$ and $D \in M_{b_w \times b_w}(\KK_w)$, one can write
    \[
	\Phi_{2,w}(X) = 
            \Phi_w^{(1)}(A)
            \Phi_w^{(2)}(B)
            \Phi_w^{(3)}(C)
            \Phi_w^{(4)}(D)
    \]
    with
    \begin{align*}
	\Phi_w^{(2)} = 
            \Char_{M_{a_w \times b_w}(\OO_w)},
            &\hspace*{1cm} 
        \Phi_w^{(3)} = 
            \Char_{M_{b_w \times a_w}(\OO_w)}, \\
        \supp(\Phi_w^{(1)}) \subset 
            \p_w^{-r}M_{a_w \times a_w}(\OO_w), 
            &\hspace*{1cm} 
        \supp(\Phi_w^{(4)}) \subset 
            \p_w^{-r}M_{b_w \times b_w}(\OO_w),
    \end{align*}
    where $r$ is as in Inequality \eqref{large enough r}.
\end{lemma}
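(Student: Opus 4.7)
My plan is to prove the lemma by writing $y$ in block form and factoring the Fourier integral defining $\Phi_{2,w}$ into four independent one-block Fourier transforms. Writing $y = \begin{pmatrix} A' & B' \\ C' & D' \end{pmatrix}$ with blocks of the appropriate sizes, one has $\tr(yX) = \tr(A'A) + \tr(B'C) + \tr(C'B) + \tr(D'D)$. By the definition of $\X_w$ together with Equation \eqref{def ext mu w}, the integrand $\phi_{\nu_w}(y) = \nu_w(-y)$ vanishes unless $A' \in \X_w^{(1)}$, $D' \in \X_w^{(4)}$, $B' \in M_{a_w \times b_w}(\OO_w)$ and $C' \in M_{b_w \times a_w}(\OO_w)$, and on its support it factors as a product of a function of $A'$ alone (coming from $\nu_{a_w}(-\,\cdot\,)$) and a function of $D'$ alone (coming from $\nu_{b_w}(-\,\cdot\,)$). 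Combined with the additive splitting of $\tr(yX)$, the integral separates into the advertised product.

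For the middle two factors, I would evaluate $\int_{M_{a_w \times b_w}(\OO_w)} e_w(\tr(B'C))\,dB'$ directly: this is the standard self-dual Fourier transform of the characteristic function of a lattice with respect to the trace pairing. Using the hypothesis that $e_w$ has conductor $\OO_w$ and the self-dual Haar measure on $M_n(\KK_w)$, the result equals $\Char_{M_{b_w \times a_w}(\OO_w)}(C)$ up to a volume constant absorbed in the normalization of \eqref{fp and fws}, yielding $\Phi_w^{(3)}$; the analogous computation for the $C'$-integral gives $\Phi_w^{(2)}$.

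For the support of $\Phi_w^{(1)}$, the plan is to establish additive translation invariance of the integrand and then invoke standard Fourier duality. Since $\tau_{a_w}$ and $\cg{\tau}_{a_w}$ both factor through $\GL_{a_w}(\OO_w/\p_w^r\OO_w)$, the matrix coefficient $\mu'_{a_w}$ is bi-invariant under $1 + \p_w^r M_{a_w \times a_w}(\OO_w)$; together with the conductor bound \eqref{large enough r} on $\chi_{w,2}$, the same bi-invariance then holds for $\mu_{a_w}$ and hence for $\nu_{a_w}$. For $A' \in \X_w^{(1)} \subset \GL_{a_w}(\OO_w)$ and $\delta \in \p_w^r M_{a_w \times a_w}(\OO_w)$, the identity $A' + \delta = A'(1 + A'^{-1}\delta)$ with $1 + A'^{-1}\delta \in 1 + \p_w^r M$ shows both that $A' + \delta$ stays in $\X_w^{(1)} = \tp{I}_{a_w,r}^0 I_{a_w,r}^0$ and that $\nu_{a_w}(A' + \delta) = \nu_{a_w}(A')$, so the extension by zero of $\nu_{a_w}(-\,\cdot\,)$ to all of $M_{a_w \times a_w}(\KK_w)$ is translation invariant by $\p_w^r M_{a_w \times a_w}(\OO_w)$. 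The usual duality argument then gives $\Phi_w^{(1)}(A) \cdot (1 - e_w(\tr(\delta A))) = 0$ for every such $\delta$, and the conductor hypothesis on $e_w$ forces $\Phi_w^{(1)}$ to vanish outside $\p_w^{-r} M_{a_w \times a_w}(\OO_w)$. An identical argument handles $\Phi_w^{(4)}$.

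The main point I expect to take care with is verifying that the extension of $\mu'_{a_w}$ in \eqref{def extn mu aw}, which is defined via the multiplicative decomposition $\X_w^{(1)} = \tp{I}_{a_w,r}^0 I_{a_w,r}^0$, really is compatible with additive translation by $\p_w^r M_{a_w \times a_w}(\OO_w)$ on the full matrix algebra. The reduction $A' + \delta = A'(1 + A'^{-1}\delta)$ is the bridge between the multiplicative and additive structures; once this compatibility is in place, the support bounds follow at once from the standard Fourier duality.
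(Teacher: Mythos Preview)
Your proposal is correct and follows essentially the same route as the paper: factor the Fourier integral over $M_n(\KK_w)$ according to the block decomposition of $y$, obtain the characteristic functions for the off-diagonal blocks directly, and deduce the support bound for the diagonal blocks from $\p_w^r$-translation invariance of the integrand. The paper phrases this last step as rewriting each diagonal integral as a finite sum over $\X_w^{(i)}$ mod $\p_w^r$ times $\Char_{\p_w^{-r}M}$, whereas you justify the translation invariance explicitly via $A'+\delta = A'(1+A'^{-1}\delta)$ before invoking Fourier duality; this extra care is a welcome clarification of a point the paper leaves implicit.
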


\begin{proof}
    The definitions of $\phi_{\nu_{a_w}}$, $\phi_{\nu_{b_w}}$ and $\phi_{\nu_w}$ immediately imply
    \begin{align*}
        \Phi_{2,w}(X) 
        &= 
        \int_{\X_w} 
            \phi_{\nu_w}
            \left(
                \begin{pmatrix}
			     \alpha & \beta \\
			     \gamma & \delta
            \end{pmatrix}
            \right)
            e_w(
                \tr(
                    \alpha A + \beta B + \gamma C + \delta D
                )
            )
        d\alpha d\beta d\gamma d\delta \\
        &= 
        \int_{\X_w^{(1)}}
		\phi_{\nu_{a_w}}
            \left(
                \alpha
            \right)
            e_w(\tr(\alpha A))
        d\alpha 
        \int_{\X_w^{(4)}}
            \phi_{\nu_{b_w}}
            (
                \delta
            )
            e_w(\tr(\delta D))
        d\delta \\
        &\times 
        \Char_{M_{a_w \times b_w}(\OO_w)}(B) \Char_{M_{b_w \times a_w}(\OO_w)}(C) \\ 
    \end{align*}
	
    Then, we may conclude as in the proof of \cite[Lemma 4.3.2 (ii)]{EHLS} by observing
    \begin{align*}
	&\Phi_w^{(1)}(A) 
        := 
        \int_{\X_w^{(1)}}
		\phi_{\nu_{a_w}}
            \left(
                \alpha
            \right)
		e_w(\tr(\alpha A))
	d\alpha \\
	&= 
        \vol(\p_w^rM_{a_w}(\OO_w))
	\sum_{
            \alpha \in \X_w^{(1)} \text{ mod } \p_w^r
        }
    	\phi_{\nu_{a_w}}
                \left(
                    \alpha
                \right)
    	e_w(\tr{\alpha A}) 
    	\Char_{\p_w^{-r}M_{a_w}(\OO_w)}(A) \ ,
    \end{align*}
    and
    \begin{align*}
	&\Phi_w^{(4)}(D) 
        := 
        \int_{\X_w^{(4)}}
		\phi_{\nu_{b_w}}
            \left(
                \delta
            \right)
		e_w(\tr(\delta D))
	d\delta \\
	&= 
        \vol(\p_w^r M_{b_w}(\OO_w))
	\sum_{
            \delta \in \X_w^{(4)} \text{ mod } p^r
        }
    	\phi_{\nu_{b_w}}
            \left(
                \delta
            \right)
    	e_w(\tr{\delta D}) 
	   \Char_{\p_w^{-r}M_{b_w}(\OO_w)}(D) \ .
    \end{align*}
\end{proof}

Define the Schwartz function $\Phi_w : M_{n \times 2n}(\QQ_p) \to \CC$ as
\begin{equation} \label{def Schwartz Phi w}
	\Phi_w(X) = \Phi_w(X_1, X_2) = \frac{(\dim \tau_w)^2}{\vol(\GG_w)} \Phi_{1,w}(-X_1) \Phi_{2,w}(X_2) \ .
\end{equation}

\subsubsection{Construction of $f_{w,s}$}
For each $w \in \Sigma_p$, write $V_w = V \otimes_{\KK} \KK_w$ and use similar notation for $V_{d, w}$ and $V^d_w$. Consider the decomposition
\[
    \hom_{\KK_w}(V_w, W_w) = \hom_{\KK_w}(V_w, V_{w,d}) \oplus \hom_{\KK_w}(V_w, V_w^d), \ \ \ X = (X_1, X_2) 
\]
and its subspace
\[
    \mathbf{X} 
        := 
    \{
        X \in \hom_{\KK_w}(V_w, W_w) \mid X(V_w) = V_w^d
    \}
        =
    \{
        (0,X) \mid X : V_w \xrightarrow{\sim} V_w^d
    \}.
\]

In fact, any $X \in \mathbf{X}$ can be viewed as an automorphism of $V_w$ (by composing with the identification of $V^d$ with $V$) and hence, we identify $\mathbf{X}$ with $\GL_{\KK_w}(V_w)$. Let $d^\times X$ be the Haar measure on the latter.

Furthermore, recall that we fixed an $\OO_w$-basis of $L_{1,w}$ in Section \ref{G(Zp) to GLn}. This provides a $\KK_w$-basis of $V_w$ and, via their identification to $V$, a $\KK_w$-basis of $V_{d, w}$ and of $V^d_w$. Hence, it also induces a $\KK_w$-basis of $W_w = V_{d, w} \oplus V^d_w$.

It identifies $\isom(V_w^d, V_w)$ with $\isom(V_{w,d}, V_w)$, $\GL_{\KK_w}(V_w)$ with $\GL_n(\KK_w)$, $\GL_{\KK_w}(W_w)$ with $\GL_{2n}(\KK_w)$, $P_n(\KK_w)$ with the subgroup of $\GL_{2n}(\KK_w)$ consisting of upper-triangular $n \times n$-block matrices, and $\hom_{\KK_w}(V_w, W_w)$ with $M_{n \times 2n}(\QQ_p)$

Therefore, we now view the Schwartz function $\Phi_w : M_{n \times 2n}(\QQ_p) \to \CC$ constructed above as a function on $\hom_{\KK_w}(V_w, W_w)$. We define the section $f_{w,s} = f_{w,s}^{\Phi_w} = f^{\Phi_w}$ of $\ind{P_n(\KK_w)}{\GL_{2n}(\KK_w)} \psi_{w,s}$ by
\begin{equation} \label{def f phi w}
    f^{\Phi_w}(g) = 
        \chi_{2,w}(\det g)
        \absv{\det g}_w^{\frac{n}{2} + s}
        \int_{\mathbf{X}}
            \Phi_w(Xg)
            \chi_{w,1}^{-1}
            \chi_{w,2}(\det X)
            |\det X|_w^{n+2s}
        d^\times X
    \ ,    
\end{equation}
as in \cite[Equation (55)]{EHLS}.

\subsubsection{Construction of $f_{w,s}^+$} \label{construction f+}
Let $f_p$ be the corresponding local Siegel-Weil section at $p$, as in Equation \eqref{fp and fws}. Ahead of our computations in the next section, we write down an explicit expression for $f_p(u,1)$ for any $u \in U_1(\QQ_p)$. 

Firstly, the restriction of the isomorphsim \eqref{prod G over Zp} to $U_1$ yields an identification $U_1(\QQ_p) = \prod_{w \in \Sigma_p} U_{1,w}$, where $U_{1,w} = \GL_{\KK_w}(V_w) = \GL_n(\KK_w)$. We write $u = (u_w)_{w \in \Sigma_p}$ accordingly. One can then consider the identity \eqref{def f phi w} for $g = (u, 1)$, where the latter is with respect to the embedding $G_1 \times G_2 \hookrightarrow G_3$. To simplify the expression, it is therefore more convenient to replace the decomposition $W_w = V_{w, d} \oplus V_w^d$ with $W_w = V_w \oplus V_w$. In that case, an element $X \in \GL_n(\KK_w) = \GL_{\KK_w}(V_w)$ corresponds to an element $(X,X)$ in $\mathbf{X}$ instead of $(0,X)$.

Secondly, using the decomposition $W_w = V_w \oplus V_w$ again and the corresponding identification $W_w = V_w \oplus V_w$, consider the element
\[
    S_w = 
    \begin{pmatrix}
        1_{a_w} & 0 & 0 & 0 \\
        0 & 0 & 0 & 1_{b_w} \\
        0 & 0 & 1_{a_w} & 0 \\
        0 & 1_{b_w} & 0 & 0
    \end{pmatrix} .
\]

\begin{remark}
    As explained in \cite[Section 2.1.11]{HLS} and \cite[Remark 3.1.4]{EHLS}, the natural inclusion of Shimura varieties associated to $G_3$ and $G_4$ does not induce the natural inclusion on Igusa tower. In fact, one needs to twist the former by the matrix $S_w$ to induce the latter. This point of view will not play a role for us but we include this remark as motivation for introducing $S_w$.
\end{remark}

Lastly, replace each $f_{w,s}$ by its translation $f_{w,s}^+$ via $g \mapsto gS_w$, and let $f_p^+$ be the corresponding local Siegel-Weil section at $p$ defined by Equation \eqref{fp and fws}. In that case, for $g=(u,1)$, we obtain that $f^+_p(u,1)$ is equal to a product over $w \in \Sigma_p$ of
\[
    \chi_{2,w}(\det u_w)
    \absv{\det u_w}_w^{\frac{n}{2} + s}
    \int_{\GL_n(\KK_w)}
        \Phi_w((Xu_w,X) S_w)
        \chi_{w,1}^{-1}
        \chi_{w,2}(\det X)
        |\det X|_w^{n+2s}  
    d^\times X
\]
and we denote the above expression by $f_{w,s}^+(u_w,1) = f_w^+(u_w,1)$, as a function of $u_w \in \GL_n(\KK_w)$.

\section{Main calculations.} \label{comp above p}
\subsection{Local integrals at places above $p$} \label{comp of Zw}
We keep the same notation as in Sections \ref{conv test vectors} and \ref{SW section at p}. In particular, for each $w \in \Sigma_p$, consider the vector $\phi_w \in \pi_w$ and $\cg{\phi}_w \in \cg{\pi}_w$ defined in Section \ref{comp test vectors}. We now compute the $p$-adic local zeta integral $Z_p$ defined in \eqref{def local zeta integrals} for the local section $f_p^+$ constructed above and the test vectors
\begin{equation} \label{def test vec varphi p and cg varphi p}
    \varphi_p = 1 \otimes \left( \bigotimes_{w \in \Sigma_p} \phi_w \right)
        \ \ \ ; \ \ \ 
    \cg{\varphi}_p = 1 \otimes \left( \bigotimes_{w \in \Sigma_p} \cg{\phi}_w \right)
\end{equation}

In particular, $Z_p$ is equal to the product over $w \in \Sigma_p$ of
\begin{align*}
    Z_w :=
        &\int_{\GL_n(\KK_w)} 
            f_{w,s}^+(g,1) \la \pi_w(g) \phi_w, \cg{\phi}_w \ra_{\pi_w}
        d^\times g \\
    =
        &\int_{\GL_n(\KK_w)} 
            \chi_{2,w}(\det g)
            \absv{\det g}_w^{\frac{n}{2} + s}
            \int_{\GL_n(\KK_w)}
                \Phi_w((Xg, X) S_w) \\
        &\times
                \chi_{w,1}^{-1}
                \chi_{w,2}(\det X)
                |\det X|_w^{n+2s}  
            \la \pi_w(g) \phi_w, \cg{\phi}_w \ra_{\pi_w}
            d^\times X 
        d^\times g
\end{align*}

According to the decomposition $M_{n \times n}(\KK_w) = M_{n \times a_w}(\KK_w) \times M_{n \times b_w}$, write $Z_1 := Xg = [Z_1', Z_1'']$ and $Z_2 := X = [Z_2', Z_2'']$, where $Z_1'$ and $Z_2'$ (resp. $Z_1''$ and $Z_2''$) are $n \times a$-matrices (resp. $n \times b$-matrices). Then,
\[
    (Xg, X) S_w = ([Z_1', Z_2''], [Z_2', Z_1''])
\]
and 
\[
    \la \pi_w(g) \phi_w, \cg{\phi}_w \ra_{\pi_w} 
        = 
    \la \pi_w(Xg) \phi_w, \cg{\pi}_w(X) \cg{\phi}_w \ra_{\pi_w}
        = 
    \la \pi_w(Z_1) \phi_w, \cg{\pi}_w(Z_2) \cg{\phi}_w \ra_{\pi_w}.
\]

Therefore, using \eqref{def Schwartz Phi w}, we obtain
\begin{align*}
    Z_w &= 
    \frac{(\dim \tau_w)^2}{\vol(\GG_w)}
    \int_{\GL_n(\KK_w)} 
        \chi_{w,2}(Z_1) \chi_{w,1}(Z_2)^{-1} 
        \absv{ \det Z_1Z_2 }_w^{s+\frac{n}{2}} 
        \\ &\times 
        \Phi_{1,w}(Z_1', Z_2'') 
        \Phi_{2,w}(Z_2', Z_1'') 
        \la 
            \pi_w(Z_1) \phi_w, 
            \cg{\pi}_w(Z_2) \cg{\phi}_w
        \ra_{\pi_w} 
    d^\times Z_1 d^\times Z_2 \ .
\end{align*}

We take the integrals over the following open subsets of full measure. We take the integral in $Z_1$ over
\begin{align*}
    \left\{ 
	\begin{pmatrix}
		1 & 0 \\ 
            C_1 & 1
	\end{pmatrix}
	\begin{pmatrix}
		A_1 & 0 \\ 
            0 & D_1
	\end{pmatrix}
	\begin{pmatrix}
		1 & B_1 \\ 
            0 & 1
	\end{pmatrix} \mid
            B_1, \tp{C}_1 \in 
                M_{a_w \times b_w}(\KK_w), 
            A_1 \in 
                \GL_{a_w}(\KK_w), 
            D_1 \in 
                \GL_{b_w}(\KK_w) 
    \right\},
\end{align*}
with the measure
\[
    \absv{ 
        \det A_1^{b_w} 
        \det D_1^{-a_w}
    }_w 
    dC_1 
    d^\times A_1 
    d^\times D_1 
    dB_1 \ .
\]

Similarly, we take the integral in $Z_2$ over
\begin{align*}
    \left\{ 
	\begin{pmatrix}
		1 & B_2 \\ 0 & 1
	\end{pmatrix}
	\begin{pmatrix}
		A_2 & 0 \\ 0 & D_2
	\end{pmatrix}
	\begin{pmatrix}
		1 & 0 \\ C_2 & 1
	\end{pmatrix} \mid 
            B_2, \tp{C}_2 \in 
                M_{a_w \times b_w}(\KK_w), 
            A_2 \in 
                \GL_{a_w}(\KK_w), 
            D_2 \in 
                \GL_{b_w}(\KK_w) 
    \right\},
\end{align*}
with the measure
\[
    \absv{ 
        \det A_2^{b_w} 
        \det D_2^{-a_w} 
    }_w 
    dC_2 
    d^\times A_2 
    d^\times D_2 
    dB_2 \ .
\]

Therefore, one has
\begin{align*}
	\Phi_{1,w}(Z_1', Z_2'') &= 
	\Phi_{1,w}\left(
		\begin{pmatrix}
			A_1 & B_2D_2 \\
			C_1A_1 & D_2
		\end{pmatrix}
	\right)	
	\\
	\Phi_{2,w}(Z_2', Z_1'') &=
	\Phi_{2,w}\left(
		\begin{pmatrix}
			A_2 + B_2D_2C_2 & A_1B_1 \\
			D_2C_2 & C_1A_1B_1 + D_1
		\end{pmatrix}
	\right)	
\end{align*}
and both can be simplified by considering their support.
\begin{lemma} \label{Z_i conditions}
    The product 
    \[
        \Phi_{1,w}
        \left(
		\begin{pmatrix}
			A_1 & B_2D_2 \\
			C_1A_1 & D_2
		\end{pmatrix}
	\right)	
	\Phi_{2,w}
        \left(
		\begin{pmatrix}
			A_2 + B_2D_2C_2 & A_1B_1 \\
			D_2C_2 & C_1A_1B_1 + D_1
		\end{pmatrix}
	\right)	
    \]
    is zero unless all of the following conditions are met:
    \begin{align*}
        A_1 \in 
            I_{a_w, r}^0 \ \ \ ; \ \ \
        D_2 \in 
            I_{b_w, r}^0 \ \ \ &; \ \ \
        C_1 \in 
            \GG_l\ \ \ ; \ \ \
        B_2 \in 
            \GG_u \\
        B_1 \in 
            M_{a_w \times b_w}(\OO_w) \ \ \ &; \ \ \
        C_2 \in 
            M_{b_w \times a_w}(\OO_w) \\
        A_2 \in 
            p^{-r} M_{a_w \times a_w}(\OO_w) \ \ \ &; \ \ \
        D_1 \in 
            p^{-r} M_{b_w \times b_w}(\OO_w)
    \end{align*}
	
	Moreover, in this case, the product is equal to 
    $
        \mu_{a_w}(A_1) 
        \mu_{b_w}(D_2) 
        \Phi_{w}^{(1)}(A_2)
        \Phi_{w}^{(4)}(D_1)
    $.
\end{lemma}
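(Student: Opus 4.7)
The plan is to analyze the two factors $\Phi_{1,w}$ and $\Phi_{2,w}$ separately, read off the required block conditions from their support and invariance properties, and then verify the claimed expression for the value of the product when all conditions are met.

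First, consider $\Phi_{1,w}$ applied to $\left(\begin{smallmatrix}A_1 & B_2D_2 \\ C_1A_1 & D_2\end{smallmatrix}\right)$. Since $\Phi_{1,w}$ is supported on $\GG_w$ and equals $\mu_w$ there, I would read off the block conditions directly from the description of $\GG_w$: the upper-left and lower-right blocks must lie in $I^{0}_{a_w,r}$ and $I^{0}_{b_w,r}$ respectively (giving $A_1\in I^{0}_{a_w,r}$ and $D_2\in I^{0}_{b_w,r}$), the upper-right block $B_2D_2$ must lie in $\p_w^{r} M_{a_w\times b_w}(\OO_w)$, and the lower-left block $C_1A_1$ must be integral. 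Since $A_1$ and $D_2$ are units in $\GL_{a_w}(\OO_w)$ and $\GL_{b_w}(\OO_w)$ respectively (Iwahori level), these conditions reduce to $B_2\in\GG_u$ and $C_1\in\GG_l$; global invertibility in $\GL_n(\OO_w)$ is automatic because $1-C_1B_2\in 1+\p_w^{r}M(\OO_w)$. Under these conditions the extension formula \eqref{def ext mu w} for $\mu_w$ yields the value $\mu_{a_w}(A_1)\mu_{b_w}(D_2)$.

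Second, for the $\Phi_{2,w}$-factor I would apply Lemma~\ref{Phi2 support} to obtain the factorization $\Phi_w^{(1)}(A_2+B_2D_2C_2)\,\Phi_w^{(2)}(A_1B_1)\,\Phi_w^{(3)}(D_2C_2)\,\Phi_w^{(4)}(C_1A_1B_1+D_1)$. The characteristic functions $\Phi_w^{(2)}$ and $\Phi_w^{(3)}$, combined with the invertibility of $A_1$ and $D_2$ over $\OO_w$ already recorded, immediately yield $B_1\in M_{a_w\times b_w}(\OO_w)$ and $C_2\in M_{b_w\times a_w}(\OO_w)$. Using these, $B_2D_2C_2\in \p_w^{r}M(\OO_w)\subset M(\OO_w)$ and $C_1A_1B_1\in M(\OO_w)$, so the supports of $\Phi_w^{(1)}$ and $\Phi_w^{(4)}$ inside $\p_w^{-r}M(\OO_w)$ reduce to the conditions $A_2\in \p_w^{-r}M_{a_w\times a_w}(\OO_w)$ and $D_1\in \p_w^{-r}M_{b_w\times b_w}(\OO_w)$.

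The remaining step, and the place where most care is needed, is to show $\Phi_w^{(1)}(A_2+B_2D_2C_2)=\Phi_w^{(1)}(A_2)$ and likewise $\Phi_w^{(4)}(C_1A_1B_1+D_1)=\Phi_w^{(4)}(D_1)$. Using the explicit finite Fourier-sum formula for $\Phi_w^{(1)}$ derived in the proof of Lemma~\ref{Phi2 support}, it suffices to check that $e_w(\tr(\alpha\cdot B_2D_2C_2))=1$ for every $\alpha\in\X_w^{(1)}$ in the sum. Since $\alpha\in \X_w^{(1)}\subset M_{a_w}(\OO_w)$ and $B_2D_2C_2\in \p_w^{r}M(\OO_w)$, the trace lies in $\p_w^{r}\OO_w\subset\OO_w$, on which $e_w$ is trivial; the symmetric argument settles $\Phi_w^{(4)}$. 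Multiplying the resulting four scalars gives exactly $\mu_{a_w}(A_1)\mu_{b_w}(D_2)\,\Phi_w^{(1)}(A_2)\,\Phi_w^{(4)}(D_1)$, completing the proof.
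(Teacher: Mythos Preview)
Your argument is correct and follows essentially the same route as the paper's own proof, which simply cites Lemma~\ref{Phi2 support} and the definition of $\Phi_{1,w}$ and declares the result clear; you have just filled in the details they omit. One small slip: you write that the lower-left block $C_1A_1$ ``must be integral,'' but the defining condition on $\GG_w$ actually forces $C_1A_1\in\p_w^{r}M_{b_w\times a_w}(\OO_w)$, which is what you need (and implicitly use) to conclude $C_1\in\GG_l$.
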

\begin{proof}
	Using Lemma \ref{Phi2 support} and the definition of $\Phi_{w,1}$, it is clear that the product above is nonzero if and only if the conditions above are satisfied. Moreover, if they are satisfied, one has
    \[
        \Phi_{1,w}
        \left(
	    \begin{pmatrix}
                A_1 & B_2D_2 \\
                C_1A_1 & D_2
            \end{pmatrix}
        \right)	
	   = 
        \mu_{a_w}(A_1) 
        \mu_{b_w}(D_2) 
\]
    by definition of $\mu_w$. One also obtains
    \[	
	\Phi_{2,w}
        \left(
            \begin{pmatrix}
                A_2 + B_2D_2C_2 & A_1B_1 \\
                D_2C_2 & C_1A_1B_1 + D_1
            \end{pmatrix}
        \right)
        =
        \Phi_w^{(1)}(A_2)
        \Phi_w^{(4)}(D_1)
    \]
	as in the proof of Lemma \ref{Phi2 support}.
\end{proof}

\begin{lemma}
Under the conditions of Lemma \ref{Z_i conditions}, one has
\[
    \la 
        \pi_w(Z_1) \phi_w, 
        \cg{\pi}_w(Z_2) \cg{\phi}_w 
    \ra_{\pi_w}
    =
    \la 
        \pi_w
        \left(
            \begin{pmatrix}
                A_1 & 0 \\ 0 & 1
            \end{pmatrix}
        \right) 
        \phi_w,
        \cg{\pi}_w
        \left(
            \begin{pmatrix}
                A_2 & 0 \\ 0 & D_1^{-1}D_2
            \end{pmatrix}
        \right)
        \cg{\phi}_w
    \ra_{\pi_w}
\]
\end{lemma}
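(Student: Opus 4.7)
The strategy is to simplify the left-hand side step by step, using the right-invariance of $\phi_w$ under $I_{w,r}$ and of $\cg{\phi}_w$ under $\tp{I}_{w,r}$, together with the $G_w$-invariance of the pairing $\la\cdot,\cdot\ra_{\pi_w}$ and the (transpose) analog of Lemma \ref{Iwahori fixed}.

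First I would absorb the off-diagonal factors $U_1$, $L_2$, and $U_2$. The condition $B_1 \in M_{a_w \times b_w}(\OO_w)$ gives $U_1 \in I_{w,r}$, so $\pi_w(Z_1)\phi_w = \pi_w(L_1 M_1)\phi_w$. Dually, $C_2 \in M_{b_w \times a_w}(\OO_w)$ gives $L_2 \in \tp{I}_{w,r}$, hence $\cg{\pi}_w(Z_2)\cg{\phi}_w = \cg{\pi}_w(U_2 M_2)\cg{\phi}_w$. The stronger condition $B_2 \in \p_w^r M_{a_w \times b_w}(\OO_w)$ places $U_2$ in the principal congruence subgroup of level $r$, hence in both $I_{w,r}$ and $\tp{I}_{w,r}$; since $M_2 \in G_w(a_w) \times G_w(b_w)$, the transpose analog of Lemma \ref{Iwahori fixed} shows that $\cg{\pi}_w(M_2)\cg{\phi}_w$ is fixed by $\tp{I}_{w,r}$, so $\cg{\pi}_w(U_2 M_2)\cg{\phi}_w = \cg{\pi}_w(M_2)\cg{\phi}_w$. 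Thus the LHS reduces to $\la \pi_w(L_1 M_1)\phi_w, \cg{\pi}_w(M_2)\cg{\phi}_w\ra_{\pi_w}$.

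Next, by $G_w$-invariance applied to both arguments with the element $L_1^{-1}$, the LHS becomes $\la\pi_w(M_1)\phi_w, \cg{\pi}_w(L_1^{-1}M_2)\cg{\phi}_w\ra_{\pi_w}$. The crucial and perhaps slightly counterintuitive observation is that $L_1$ itself lies in $\tp{I}_{w,r}$: the lower-left $(b_w \times a_w)$-block of $\tp{I}_{w,r}$ is the transpose of the upper-right block of $I_{w,r}$, which is unrestricted in $\OO_w$ modulo $\p_w^r$. Since $C_1 \in M_{b_w \times a_w}(\OO_w)$ and the other blocks of $L_1$ are trivial, $L_1 \in \tp{I}_{w,r}$, hence $L_1^{-1}$ also fixes $\cg{\pi}_w(M_2)\cg{\phi}_w$. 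This reduces the LHS to $\la\pi_w(M_1)\phi_w, \cg{\pi}_w(M_2)\cg{\phi}_w\ra_{\pi_w}$.

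A final application of $G_w$-invariance with $g = \diag(1, D_1^{-1})$ transforms $(M_1, M_2)$ into $(\diag(A_1, 1), \diag(A_2, D_1^{-1}D_2))$, giving the RHS of the lemma. The main subtlety is identifying that $L_1 \in \tp{I}_{w,r}$ (rather than $I_{w,r}$, as one might naively expect for a ``lower-triangular'' element); this relies on the precise description of the pro-$p$ $P_w$-Iwahori and its transpose given in Section \ref{P Iwa section}.
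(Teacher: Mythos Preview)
Your argument is correct and in fact simpler than the paper's. Both proofs use the same ingredients --- $I_{w,r}$-fixedness of $\phi_w$, $\tp{I}_{w,r}$-fixedness of $\cg{\phi}_w$, Lemma~\ref{Iwahori fixed} (and its transpose analogue), and $G_w$-invariance of the pairing --- but they are organized differently. The paper moves $U_2^{-1}$ across to the first argument via invariance and then computes an explicit LDU factorization of the product
\[
    U_2^{-1}\,L_1\,\diag(1,D_1) \;=\; \cg{\gamma}_0\,\diag(1,D_1)\,\gamma_0
\]
with $\gamma_0 \in I_{w,r}$ and $\cg{\gamma}_0 \in \tp{I}_{w,r}$; verifying this requires tracking several block congruences (for instance $1 - B_2C_1 \in 1+\p_w^{2r}M_{a_w}(\OO_w)$ and $-B_2D_1 \in M_{a_w\times b_w}(\OO_w)$). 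You instead eliminate $U_2$ and $L_1$ one at a time, observing directly that each already lies in $\tp{I}_{w,r}$: $U_2$ because $B_2 \in \GG_u$ places it in the principal congruence subgroup of level $r$, and $L_1$ because the lower-left $(b_w\times a_w)$-block of $\tp{I}_{w,r}$ is unrestricted in $\OO_w$. Your route bypasses the matrix computation entirely and makes transparent that the claim follows from the shape of $\tp{I}_{w,r}$ alone.

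One minor imprecision: you write ``$C_1 \in M_{b_w\times a_w}(\OO_w)$'', but the condition from Lemma~\ref{Z_i conditions} is $C_1 \in \GG_l$, which actually means $C_1 \in \p_w^r M_{b_w\times a_w}(\OO_w)$. This does not affect your argument --- as you correctly point out, even the weaker integrality already gives $L_1 \in \tp{I}_{w,r}$ --- but you may want to state the hypothesis accurately.
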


\begin{proof}
    We write
    \[
        Z_1 = \begin{pmatrix}
            1 & 0 \\ 
            C_1 & 1
        \end{pmatrix}
        \begin{pmatrix}
            A_1 & 0 \\ 
            0 & D_1
        \end{pmatrix}
        \begin{pmatrix}
            1 & B_1 \\ 
            0 & 1
        \end{pmatrix} 
        \ \ \ \text{and} \ \ \
        Z_2 = \begin{pmatrix}
            1 & B_2 \\ 
            0 & 1
        \end{pmatrix}
        \begin{pmatrix}
            A_2 & 0 \\ 
            0 & D_2
        \end{pmatrix}
        \begin{pmatrix}
            1 & 0 \\ 
            C_2 & 1
        \end{pmatrix}
    \]
    under the conditions of Lemma \ref{Z_i conditions}. As 
    $
        \begin{pmatrix}
            1 & B_1 \\
            0 & 1
        \end{pmatrix} 
        \in I_{w,r}
    $ 
    and
    $
        \begin{pmatrix}
            1 & 0 \\ 
            C_2 & 1
        \end{pmatrix} 
        \in \tp{I}_{w,r}
    $
    fix $\phi_w$ and $\cg{\phi}_w$ respectively, the pairing
    \[
        \la 
            \pi_w(Z_1)
            \phi_w, 
            \cg{\pi}_w(Z_2) 
            \cg{\phi}_w 
        \ra_{\pi_w}
    \]
    is equal to
    \begin{align*}
        \la
            \pi_w
            \left(
                \begin{pmatrix}
                    1 & -B_2 \\
                    0 & 1
                \end{pmatrix}
                \begin{pmatrix}
                    1 & 0 \\
                    C_1 & D_1
                \end{pmatrix}
            \right)
            \pi_w
            \left(
                \begin{pmatrix}
                    A_1	& 0 \\
                    0 & 1
                \end{pmatrix}
            \right)
            \phi_w,
            \cg{\pi}_w
           \left(
                \begin{pmatrix}
                    A_2 & 0 \\
                    0 & D_2
                \end{pmatrix}
            \right)
            \cg{\phi}_w
        \ra_{\pi_w}
    \end{align*}
	
    Furthermore, write
    \[
        \begin{pmatrix}
            1 & -B_2 \\
            0 & 1
        \end{pmatrix}
        \begin{pmatrix}
            1 & 0 \\
            C_1 & D_1
        \end{pmatrix} 
        =
        \begin{pmatrix}
            1 & 0 \\
            C & 1
        \end{pmatrix}
        \begin{pmatrix}
            A & 0 \\
            0 & D
        \end{pmatrix}
        \begin{pmatrix}
            1 & B \\
            0 & 1
        \end{pmatrix}
    \]
    where 
    \begin{align*}
        A &= 
            1 - B_2C_1 \in 
                1 + \p_w^{2r}M_{a_w}(\OO_w), \\
        CA &= 
            C_1 \in 
                \p_w^rM_{b_w \times a_w}(\OO_w), \\
        AB &= 
            -B_2D_1 \in 
                M_{a_w \times b_w}(\OO_w) \\
        D_1 &= 
            D + CAB \in 
                \p_w^{-r}M_{b_w}(\OO_w) \ .
    \end{align*}
    
    Note that $1 = A^{-1} + B_2C_1A^{-1}$, so
    \begin{align*}
        &A^{-1} = 
            1 - B_2C \in 
                1 + \p_w^{2r}M_{a_w}(\OO_w), \\
        &C \in 
            \p_w^rM_{b_w \times a_w}(\OO_w), \ \ \ \ \ 
        B \in 
            M_{a_w \times b_w}(\OO_w), \\
        &D = 
            (1+CB_2)D_1 \in 
                (1+\p_w^{2r})M_{b_w}(\OO_w)D_1 \ .
    \end{align*}
    
    Therefore,
    \begin{align*}
        \begin{pmatrix}
            1 & -B_2 \\
            0 & 1
        \end{pmatrix}
        \begin{pmatrix}
            1 & 0 \\
            C_1 & D_1
        \end{pmatrix} =
        \begin{pmatrix}
            1 & 0 \\
            C & 1 + CB_2
        \end{pmatrix}
        \begin{pmatrix}
            1 & 0 \\
            0 & D_1
        \end{pmatrix}
        \begin{pmatrix}
            A & AB \\
            0 & 1
        \end{pmatrix}
    \end{align*}
	
    Setting
    \begin{align*}	
        \gamma_0 = 
        \begin{pmatrix}
            A & AB \\
            0 & 1
        \end{pmatrix} 
        \ \ \ \text{ and } \ \ \
        \cg{\gamma}_0 =
        \begin{pmatrix}
            1 & 0 \\
            C & 1 + CB_2
        \end{pmatrix} \ ,
    \end{align*}
    one obtains
    \begin{align*}
        \la 
            &\pi_w(Z_1)\phi_w, 
            \cg{\pi}_w(Z_2) \cg{\phi}_w 
        \ra_{\pi_w} \\
            &= 
        \la 
            \pi_w
            \left(
                \gamma_0
                \begin{pmatrix}
                    A_1 & 0 \\ 
                    0 & 1
                \end{pmatrix}
            \right) 
            \phi_w,
            \cg{\pi}_w
            \left(
                \begin{pmatrix}
                    1 & 0 \\ 
                    0 & D_1^{-1}
                \end{pmatrix}
                \cg{\gamma}_0
                \begin{pmatrix}
                    A_2 & 0 \\ 
                    0 & D_2
                \end{pmatrix}
            \right) 
            \cg{\phi}_w
        \ra_{\pi_w}
    \end{align*}
	
    Then Lemma \ref{Iwahori fixed} yields the desired result since $\gamma_0, \tp{\cg{\gamma}_0} \in I_{w,r}$.
\end{proof}

\begin{proposition} \label{pairing pi phi}
    Under the conditions of Lemma \ref{Z_i conditions}, we have 
    \[
        \Phi_{w,1}(Z_1', Z_2'') 
        \Phi_{w,2}(Z_2', Z_1'') 
        \la 
            \pi_w(Z_1) 
            \phi_w, 
            \cg{\pi}_w(Z_2) 
            \cg{\phi}_w
        \ra_{\pi_w} =
        \vol(I^0_{a_w,b_w, r}) 
        \cdot J_{a_w} 
        \cdot J_{b_w}
    \]
    where
    \begin{align*}
        J_{a_w} 
        &= 
            \mu_{a_w}(A_1) 
            \Phi_w^{(1)}(A_2)
            \absv{
                \det A_2
            }_w^{b_w/2}
            \la 
                \pi_{a_w}(A_1) 
                \phi_{a_w}, 
                \cg{\pi}_{a_w}(A_2) 
                \cg{\phi}_{a_w}
            \ra_{\pi_{a_w}}, 
        \\
        J_{b_w} 
        &= 
            \mu_{b_w}(D_2) 
            \Phi_w^{(4)}(D_1)
            \absv{
                \det D_1
            }_w^{a_w/2}
            \la 
                \pi_{b_w}(D_1) 
                \phi_{b_w},
                \cg{\pi}_{b_w}(D_2)
                \cg{\phi}_{b_w}
            \ra_{\pi_{b_w}}
    \end{align*}
\end{proposition}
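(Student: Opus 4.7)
I would begin by noting that Lemma~\ref{Z_i conditions} factors the product $\Phi_{1,w}(Z_1', Z_2'')\Phi_{2,w}(Z_2', Z_1'')$ as $\mu_{a_w}(A_1)\mu_{b_w}(D_2)\Phi_w^{(1)}(A_2)\Phi_w^{(4)}(D_1)$, while the preceding lemma reduces the pairing to $\la \pi_w(g_1)\phi_w, \cg{\pi}_w(g_2)\cg{\phi}_w\ra_{\pi_w}$ with $g_1 = \diag(A_1, 1)$ and $g_2 = \diag(A_2, D_1^{-1}D_2)$. So the heart of the matter is to show this pairing equals $\vol(I^0_{a_w, b_w, r})\absv{\det A_2}_w^{b_w/2}\absv{\det D_1}_w^{a_w/2}$ times the product of the two Levi-level pairings appearing in $J_{a_w}J_{b_w}$.

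To do so, I would expand this pairing via the integral formula \eqref{inner product pi w integral} as an integral over $K_w$. Since $\varphi_w$ has support $P_{a_w, b_w}I_{w, r}$ and $g_1 \in I^0_{a_w, b_w, r}$ normalizes $I_{w, r}$ (as $A_1 \in I^0_{a_w, r}$ normalizes $P_{a_w}^u$ modulo $p^r$), the support of $k \mapsto \varphi_w(kg_1)$ inside $K_w$ is exactly $I^0_{a_w, b_w, r}$. I parametrize $k$ via the Iwahori decomposition $k = u^+ m u^-$ with $u^+ = \begin{pmatrix} 1 & B \\ 0 & 1\end{pmatrix}$, $m = \diag(A, D)$, and $u^- = \begin{pmatrix} 1 & 0 \\ C & 1\end{pmatrix}$, where $B \in M_{a_w \times b_w}(\OO_w)$, $(A, D) \in K_{a_w} \times K_{b_w}$, and $C \in \p_w^r M_{b_w \times a_w}(\OO_w)$. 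The unipotent radical acts trivially under normalized parabolic induction, so $u^+$ drops out, and left $P_{a_w, b_w}$-covariance reduces everything to the evaluations at $u^- g_i$.

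The crucial commutations are
\[
    u^- g_1 = g_1 \begin{pmatrix} 1 & 0 \\ CA_1 & 1\end{pmatrix}, \qquad u^- g_2 = g_2 \begin{pmatrix} 1 & 0 \\ D_2^{-1}D_1 C & 1\end{pmatrix}.
\]
The first lower-unipotent factor lies in $I_{w, r}$ since $CA_1 \in \p_w^r M(\OO_w)$; the second lies in $\tp{I}_{w, r}$ thanks to the integrality $D_2^{-1}D_1 C \in M_{b_w \times a_w}(\OO_w)$, which uses $D_2 \in K_{b_w}$, $D_1 \in \p_w^{-r}M(\OO_w)$, and $C \in \p_w^r M(\OO_w)$. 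Both factors fix $\varphi_w$ and $\cg{\varphi}_w$ respectively, and tracking the modulus character $\delta_{P_{a_w, b_w}}^{1/2}$ yields
\begin{align*}
    \varphi_w(kg_1) &= \pi_{a_w}(AA_1)\phi_{a_w} \otimes \pi_{b_w}(D)\phi_{b_w}, \\
    \cg{\varphi}_w(kg_2) &= \absv{\det A_2}_w^{b_w/2}\absv{\det D_1}_w^{a_w/2} \cdot \cg{\pi}_{a_w}(AA_2)\cg{\phi}_{a_w} \otimes \cg{\pi}_{b_w}(DD_1^{-1}D_2)\cg{\phi}_{b_w},
\end{align*}
since $\delta_{P_{a_w, b_w}}^{1/2}(g_1) = 1$ and $\absv{\det D_2}_w = 1$.

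Pairing these via $(\cdot,\cdot)_w = \la\cdot,\cdot\ra_{\pi_{a_w}} \otimes \la\cdot,\cdot\ra_{\pi_{b_w}}$ and invoking invariance of each factor to remove the $(A, D)$ dependence, that is
\[
    \la\pi_{a_w}(AA_1)\phi_{a_w}, \cg{\pi}_{a_w}(AA_2)\cg{\phi}_{a_w}\ra = \la\pi_{a_w}(A_1)\phi_{a_w}, \cg{\pi}_{a_w}(A_2)\cg{\phi}_{a_w}\ra
\]
and the analogous identity $\la\pi_{b_w}(D)\phi_{b_w}, \cg{\pi}_{b_w}(DD_1^{-1}D_2)\cg{\phi}_{b_w}\ra = \la\pi_{b_w}(D_1)\phi_{b_w}, \cg{\pi}_{b_w}(D_2)\cg{\phi}_{b_w}\ra$, the integrand becomes constant in $(B, A, D, C)$. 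Integration over $I^0_{a_w, b_w, r}$ then yields the volume factor, and multiplying by the factorization from Lemma~\ref{Z_i conditions} reconstitutes $\vol(I^0_{a_w, b_w, r})J_{a_w}J_{b_w}$. The main technical hurdle is the commutation computation for $\cg{\varphi}_w$, especially verifying the integrality $D_2^{-1}D_1 C \in M(\OO_w)$, which is exactly where the support constraints on $A_2, D_1$ from Lemma~\ref{Z_i conditions} intervene, together with the careful bookkeeping of $\delta_{P_{a_w, b_w}}^{1/2}$ to extract the correct $\absv{\det A_2}^{b_w/2}_w\absv{\det D_1}^{a_w/2}_w$ factor.
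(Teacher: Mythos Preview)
Your overall strategy matches the paper's exactly: reduce via the preceding lemma, expand the pairing as an integral over $\GL_n(\OO_w)$, restrict to $I^0_{a_w,b_w,r}$ by support, use the Iwahori decomposition $k=u^+mu^-$, drop $u^+$ by parabolic covariance, remove $u^-$ by an invariance argument, then collect the modulus factor and use $G_w$-invariance of the Levi pairings. The one genuine gap is in your commutation for $\cg{\varphi}_w$.

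You write $u^- g_2 = g_2 \begin{pmatrix} 1 & 0 \\ D_2^{-1}D_1 C & 1\end{pmatrix}$, but with $g_2=\diag(A_2,D_1^{-1}D_2)$ the correct conjugate is
\[
g_2^{-1}u^- g_2=\begin{pmatrix} 1 & 0 \\ D_2^{-1}D_1 C A_2 & 1\end{pmatrix}.
\]
Now $D_2^{-1}\in\GL_{b_w}(\OO_w)$, $D_1\in\p_w^{-r}M(\OO_w)$, $C\in\p_w^rM(\OO_w)$, and $A_2\in\p_w^{-r}M(\OO_w)$, so the $(2,1)$-block $D_2^{-1}D_1CA_2$ lands only in $\p_w^{-r}M(\OO_w)$, not in $M(\OO_w)$. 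Hence this matrix need not lie in $\tp{I}_{w,r}$ (it need not even be integral), and your right-invariance argument for $\cg{\varphi}_w$ does not go through. The integrality check you wrote is correct for $D_2^{-1}D_1C$, but that is not the quantity that appears.

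The paper handles this step differently: rather than pushing $u^-$ through $g_2$, it invokes Lemma~\ref{Iwahori fixed} (and, tacitly, its analogue for $\cg{\varphi}_w$ and $\tp{I}_{w,r}$). The point is that $u^-$ itself already lies in $\tp{I}_{w,r}$ because $C\in\p_w^r$, and the lemma asserts directly that $\cg{\pi}_w(g_2)\cg{\varphi}_w$ is $\tp{I}_{w,r}$-fixed for any block-diagonal $g_2$. This gives $\cg{\varphi}_w(mu^-g_2)=\cg{\varphi}_w(mg_2)$ without ever conjugating by the non-integral $g_2$. So the fix is either to quote Lemma~\ref{Iwahori fixed} as the paper does, or to reorganize the commutation by moving $u^-$ to the \emph{left} of $m$ first (where $mu^-m^{-1}$ has $(2,1)$-block $DCA^{-1}\in\p_w^rM(\OO_w)$) and then argue via the Levi-normalization statement in that lemma.
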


\begin{proof}
    Using the conditions on $Z_1$ and $Z_2$, we have
    \begin{align*}	
        &\la 
            \pi_w(Z_1)
            \phi_w, 
            \cg{\pi}_w(Z_2) 
            \cg{\phi}_w 
        \ra_{\pi_w} \\
        &= 
        \la 
            \pi_w
            \left(
                \begin{pmatrix}
                    A_1 & 0 \\ 
                    0 & 1
                \end{pmatrix}
            \right) 
            \phi_w,
            \cg{\pi}_w
            \left(
                \begin{pmatrix}
                    A_2 & 0 \\ 
                    0 & D_1^{-1}D_2
                \end{pmatrix}
            \right) 
            \cg{\phi}_w
        \ra_{\pi_w} \\
        &= 
        \int_{\GL_n(\OO_w)} 
            (
                \varphi_w
                \left(
                    k
                    \begin{pmatrix}
                        A_1 & 0 \\ 
                        0 & 1
                    \end{pmatrix}
                \right),
                \tilde{\varphi}_w
                \left(
                    k
                    \begin{pmatrix}
                        A_2 & 0 \\ 
                        0 & D_1^{-1}D_2
                   \end{pmatrix}
                \right)
            )_{w} 
        d^\times k \ ,
    \end{align*}
    using Equation \eqref{inner product pi w integral}.
	
    As the support of $\varphi_w$ is $P_{a_w,b_w} I_{w,r}$ and its intersection with $\GL_n(\OO_w)$ is equal to $I^0_{a_w,b_w,r}$, the integrand above is nonzero if and only if $k \in I^0_{a_w,b_w,r}$. Write such a $k \in I^0_{a_w,b_w,r}$ as
    \[
        k = 
        \begin{pmatrix}
            1 & B \\ 0 & 1
        \end{pmatrix}
        \begin{pmatrix}
            A & 0 \\ 0 & D
        \end{pmatrix}
        \begin{pmatrix}
            1 & 0 \\ C & 1
        \end{pmatrix}
    \]
    with 
    $
        A \in \GL_{a_w}(\OO_w)
    $, 
    $
        D \in \GL_{b_w}(\OO_w)
    $, 
    $
        B \in M_{a_w \times b_w}(\OO_w)
    $ and 
    $
        C \in \p_w^r M_{b_w \times a_w}(\OO_w)
    $.
    Then, using Lemma \ref{Iwahori fixed}, one obtains
    \begin{align*}
        \varphi_w
        \left(
            k
            \begin{pmatrix}
                A_1 & 0 \\ 
                0 & 1
            \end{pmatrix}
        \right)
        &= 
        \varphi_w \left( 
        \begin{pmatrix}
            AA_1 & 0 \\ 0 & D
        \end{pmatrix}
        \right) \\
        \cg{\varphi}_w
        \left(
            k
            \begin{pmatrix}
                A_2 & 0 \\ 
                0 & D_1^{-1}D_2
            \end{pmatrix}
        \right) 
        &=
        \cg{\varphi}_w
        \left(
            \begin{pmatrix}
                AA_2 & 0 \\ 
                0 & DD_1^{-1}D_2
            \end{pmatrix}
        \right) 
    \end{align*}
    
    Observe that the determinant of the matrices $A$, $D$, $A_1$ and $D_2$ are all integral $p$-adic units. Therefore, using the definition of $\varphi_w$ (resp. $\cg{\varphi}_w$) and its relation to $\phi_{a_w} \otimes \phi_{b_w}$ (resp. $\cg{\phi}_{a_w} \otimes \cg{\phi}_{b_w})$, the integrand above is equal to
    \begin{align*}
        &
        \absv{
            \det A_2
        }_w^{b_w/2} 
        \absv{
            \det D_1^{-1}
        }_w^{-a_w/2} \\
        &\times
        \la 
            \pi_{a_w}(AA_1) \phi_{a_w}
            \otimes 
            \pi_{b_w}(D) \phi_{b_w},
            \cg{\pi}_{a_w}(AA_2) \cg{\phi}_{a_w} 
            \otimes 
            \cg{\pi}_{b_w}(DD_1^{-1}D_2) \cg{\phi}_{b_w}
        \ra_{a_w,b_w} \\
        &=
        \absv{
            \det A_2
        }_w^{b_w/2} 
        \absv{
            \det D_1
        }_w^{a_w/2}
        \la 
            \pi_{a_w}(A_1) \phi_{a_w}, 
            \cg{\pi}_{a_w}(A_2) \cg{\phi}_{a_w}
        \ra_{\pi_{a_w}}
        \la 
            \pi_{b_w}(D_1) \phi_{b_w},
            \cg{\pi}_{b_w}(D_2) \cg{\phi}_{b_w}
        \ra_{\pi_{b_w}} \ ,
    \end{align*}
    which does not depend on $k$. The result follows by using the second part of Lemma \ref{Z_i conditions}.
\end{proof}

\begin{corollary} \label{corZwIaIb}
    The zeta integral $Z_w$ is equal to
    \[
        \frac{
            (\dim \tau_w)^2 \vol(I^0_{a_w, b_w, r})
        }{
            \vol(I_{a_w, r}^0)\vol(I_{b_w, r}^0)
        } \cdot 
        \mathcal{I}_{a_w} \cdot \mathcal{I}_{b_w}
    \]
    where
    \begin{align*}
        \mathcal{I}_{a_w} = 
        \int_{I_{a_w, r}^0} 
            &\int_{\GL_{a_w}(\KK_w)} 
                \mu_{a_w}(A_1) 
                \chi_{w,2}(A_1) 
                \chi_{w,1}^{-1}(A_2) 
                \\
                &\times
                \Phi_w^{(1)}(A_2) 
                \absv{
                    \det A_2
                }_w^{s + \frac{a_w}{2}} 
                \la 
                    \pi_{a_w}(A_1) 
                    \phi_{a_w}, 
                    \cg{\pi}_{a_w}(A_2)
                    \cg{\phi}_{a_w} 
                \ra_{\pi_{a_w}} 
            d^\times A_2 
        d^\times A_1 \\
        \mathcal{I}_{b_w} = 
        \int_{I_{b_w, r}^0} 
            &\int_{\GL_{b_w}(\KK_w)} 
                \mu_{b_w}(D_2) 
                \chi_{w,2}(D_1) 
                \chi_{w,1}^{-1}(D_2)
                \\
                &\times
                \Phi_w^{(4)}(D_1) 
                \absv{
                    \det D_1
                }_w^{s + \frac{b_w}{2}} 
                \la 
                    \pi_{b_w}(D_1) 
                    \phi_{b_w}, 
                    \cg{\pi}_{b_w}(D_2) 
                    \cg{\phi}_{b_w} 
                \ra_{\pi_{b_w}} 
            d^\times D_1 
        d^\times D_2
    \end{align*}
\end{corollary}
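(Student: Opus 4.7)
The plan is to unfold the integrand of $Z_w$ using the Bruhat-type coordinates announced for $Z_1$ and $Z_2$, then to apply in turn Lemma \ref{Z_i conditions} (to cut down the domain of integration and simplify the pair of Schwartz factors) and Proposition \ref{pairing pi phi} (to factor the local matrix coefficient). After these two steps, the integrand splits as a product of an $(A_1, A_2)$-piece, a $(D_1, D_2)$-piece, and factors depending only on the unipotent parameters $B_i, C_i$; the final identification then becomes an exercise in tracking exponents and volumes.

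Concretely, I would first substitute $Z_1 = u_1^- \diag(A_1, D_1) u_1^+$ and $Z_2 = u_2^+ \diag(A_2, D_2) u_2^-$ together with the given Jacobian moduli. Using $\det Z_i = \det A_i \det D_i$, the global factors $\chi_{w,2}(\det Z_1)\chi_{w,1}^{-1}(\det Z_2)|\det Z_1 Z_2|_w^{s+n/2}$ split at once into a product of characters and absolute values of the $A_i$ and $D_i$. Lemma \ref{Z_i conditions} then restricts the integration to the compact regions it describes, forces $|\det A_1|_w = |\det D_2|_w = 1$, and replaces the Schwartz product by $\mu_{a_w}(A_1)\mu_{b_w}(D_2)\Phi_w^{(1)}(A_2)\Phi_w^{(4)}(D_1)$. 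Proposition \ref{pairing pi phi} then produces the factor $\vol(I_{a_w,b_w,r}^0) \cdot J_{a_w} \cdot J_{b_w}$, where the $\delta_P^{1/2}$-twist from the normalized induction contributes precisely the $|\det A_2|_w^{b_w/2}$ and $|\det D_1|_w^{a_w/2}$ needed to produce the exponents $s + a_w/2$ and $s + b_w/2$ prescribed by $\mathcal{I}_{a_w}$ and $\mathcal{I}_{b_w}$.

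At this stage the integrand factors into $\mathcal{I}_{a_w} \cdot \mathcal{I}_{b_w}$ against a product of volumes coming from integrating out $B_1 \in M_{a_w \times b_w}(\OO_w)$, $C_2 \in M_{b_w \times a_w}(\OO_w)$, $B_2 \in \GG_u$ and $C_1 \in \GG_l$. Using the decomposition $\GG_w = \GG_l(I_{a_w,r}^0 \times I_{b_w,r}^0)\GG_u$, this volume product is identified with $\vol(\GG_w)/[\vol(I_{a_w,r}^0)\vol(I_{b_w,r}^0)]$, which cancels the $\vol(\GG_w)$ appearing in the denominator of $(\dim \tau_w)^2/\vol(\GG_w)$ from \eqref{def Schwartz Phi w}, leaving precisely the ratio $(\dim \tau_w)^2 \vol(I_{a_w,b_w,r}^0)/[\vol(I_{a_w,r}^0)\vol(I_{b_w,r}^0)]$ stated in the corollary. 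The main obstacle is purely bookkeeping: keeping track of the three separate appearances of the modulus character (from the Bruhat Jacobians, from the shift $|\det Z_1 Z_2|^{s+n/2}$, and from the normalized induction hidden in Proposition \ref{pairing pi phi}) together with the constraints $|\det A_1|_w = |\det D_2|_w = 1$, after which the claimed factorization is immediate.
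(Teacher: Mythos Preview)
Your proposal is correct and follows essentially the same approach as the paper: apply Lemma \ref{Z_i conditions} and Proposition \ref{pairing pi phi} to reduce the integrand, use that $\absv{\det A_1}_w = \absv{\det D_2}_w = 1$, integrate out the unipotent variables $B_1, C_1, B_2, C_2$, and then invoke the decomposition $\GG_w = \GG_l (I^0_{a_w,r} \times I^0_{b_w,r}) \GG_u$ to cancel $\vol(\GG_w)$. The only cosmetic difference is that the paper carries the volume factor $\vol(M_{a_w \times b_w}(\OO_w))^2$ explicitly before invoking the decomposition of $\GG_w$, whereas you absorb it directly (implicitly using the normalization that this volume is $1$).
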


\begin{proof}
    Using Lemma \ref{Z_i conditions} and Lemma \ref{pairing pi phi},
    \begin{align*}
        Z_w 
        &= 
            \frac{(\dim \tau_w)^2}{\vol(\GG_w)} \\
            &\times 
            \int_{
                A_1, B_1, C_1, A_2, B_2, D_1, C_2, D_2
            } 
                \chi_{w,2}(A_1)
                \chi_{w,2}(D_1) 
                \chi_{w,1}^{-1}(A_2)
                \chi_{w,1}^{-1}(D_2) \\
                &\times 
                \absv{
                    \det A_1 
                    \det D_1 
                    \det A_2 
                    \det D_2
                }_w^{s+\frac{n}{2}} \\
                &\times 
                \vol(
                    I^0_{a_w, b_w, r}
                )
                J_{a_w} J_{b_w} \\
            &\times 
            \absv{
                \det A_1^{b_w} 
                \det D_1^{-a_w}
            } 
            d^\times A_1 
            dB_1 
            dC_1 
            d^\times D_1 \\
            &\times 
            \absv{
                \det A_2^{-b_w}
                \det D_2^{a_w}
            } 
            d^\times A_2 
            dB_2 
            dC_2 
            d^\times D_2
    \end{align*}
    where the domain of integration for the matrices $A_i$, $B_i$, $C_i$ and $D_i$ ($i=1,2$) is given by the conditions of Lemma \ref{Z_i conditions}.
	
    Note that the integrand is independent of $B_1 \in M_{a_w \times b_w}(\OO_w)$, $B_2 \in \GG^u$, $C_1 \in \GG^l$ and $C_2 \in M_{b_w \times a_w}(\OO_w)$. Moreover, the determinants of the matrices $A_1$ and $D_2$ are both $p$-adic units. Therefore, the above simplifies to
    \begin{align*}	
        Z_w 
        &= 
            \frac{(\dim \tau_w)^2}{\vol(\GG_w)} 
            \vol(I^0_{a_w, b_w, r})
            \vol(M_{a_w \times b_w}(\OO_w))^2 \vol(\GG^l) 
            \vol(\GG^u) \\
            &\times 
            \int_{I_{a_w, r}^0} 
            \int_{I_{b_w, r}^0} 
            \int_{\GL_{a_w}(\KK_w)} 
            \int_{\GL_{b_w}(\KK_w)}
                \chi_{w,2}(A_1)
                \chi_{w,2}(D_1) 
                \chi_{w,1}^{-1}(A_2)
                \chi_{w,1}^{-1}(D_2) \\
                &\times 
                \mu_{a_w}(A_1) 
                \Phi_w^{(1)}(A_2)
                \absv{
                    \det A_2
                }_w^{s+\frac{a_w}{2}}
                \la 
                    \pi_{a_w}(A_1) 
                    \phi_{a_w}, 
                    \cg{\pi}_{a_w}(A_2) 
                    \cg{\phi}_{a_w}
                \ra_{\pi_{a_w}} \\
                &\times
                \mu_{b_w}(D_2) 
                \Phi_w^{(4)}(D_1)
                \absv{
                    \det D_1
                }_w^{s+\frac{b_w}{2}}
                \la 
                    \pi_{b_w}(D_1)
                    \phi_{b_w},
                    \cg{\pi}_{b_w}(D_2) 
                    \cg{\phi}_{b_w}
                \ra_{\pi_{b_w}} \\
            &\times 
            d^\times D_1 
            d^\times A_2 
            d^\times D_2 
            d^\times A_1 \ ,
    \end{align*}	
    and using the decomposition $\GG_w = \GG^l (I_{a_w, r}^0 \times I_{b_w, r}^0) \GG^u$, the result follows immediately.
\end{proof}

\begin{theorem} \label{main thm - comp of Zw}
    The integrals $\mathcal{I}_{a_w}$ and $\mathcal{I}_{b_w}$ are equal to
    \begin{align*}
        \mathcal{I}_{a_w} &= 
        \displaystyle{
            \frac{
                \epsilon(
                    -s + \frac{1}{2}, 
                    \pi_{a_w} 
                    \otimes 
                    \chi_{w,1}
                ) 
                L(
                    s + \frac{1}{2}, 
                    \cg{\pi}_{a_w} 
                    \otimes 
                    \chi_{w,1}^{-1}
                )
            }{
                L(
                    -s+\frac{1}{2},
                    \pi_{a_w} 
                    \otimes 
                    \chi_{w,1}
                )
            } \cdot 
            \frac{
                \vol(\X^{(1)}) \vol(I_{a_w, r}^0)
            }
            {
                (\dim \tau_{a_w})^2
            } 
            \la 
                \phi_{a_w}, 
                \cg{\phi}_{a_w} 
            \ra_{\pi_{a_w}}
        } \\
        \mathcal{I}_{b_w} &= 
        \displaystyle{
            \frac{
                L(
                    s+\frac{1}{2},
                    \pi_{b_w} 
                    \otimes 
                    \chi_{w,2})
            }{
                \epsilon(
                    s + \frac{1}{2},
                    \pi_{b_w} 
                    \otimes 
                    \chi_{w,2}) 
                L(
                    -s + \frac{1}{2},
                    \cg{\pi}_{b_w} 
                    \otimes 
                    \chi_{w,2}^{-1})
            } \cdot 			
            \frac{
                \vol(\X^{(4)}) \vol(I_{b_w, r}^0)
            }
            {
                (\dim \tau_{b_w})^2
            } \la 
                \phi_{b_w}, 
                \cg{\phi}_{b_w} 
            \ra_{\pi_{b_w}}
        }
    \end{align*}
    Therefore, by setting 
    \[
        L\left(
            s + \frac{1}{2},
            \ord,
            \pi_w, 
            \chi_w
        \right) := 
        \displaystyle{
            \frac{
                \epsilon(
                    -s + \frac{1}{2},
                    \pi_{a_w}
                    \otimes 
                    \chi_{w,1}) 
                L(
                    s + \frac{1}{2},
                    \cg{\pi}_{a_w}
                    \otimes 
                    \chi_{w,1}^{-1})
                L(
                    s+\frac{1}{2}, 
                    \pi_{b_w} 
                    \otimes 
                    \chi_{w,2})
            }{
                L(
                    -s+\frac{1}{2}, 
                    \pi_{a_w} 
                    \otimes 
                    \chi_{w,1})
                \epsilon(
                    s + \frac{1}{2},
                    \pi_{b_w} 
                    \otimes 
                    \chi_{w,2}
                ) 
                L(
                    -s + \frac{1}{2},
                    \cg{\pi}_{b_w} 
                    \otimes 
                    \chi_{w,2}^{-1}
                )
            }
        }
    \]
    one has
    \[
        Z_w = 
        L\left(
            s + \frac{1}{2},
            \ord, 
            \pi_w, 
            \chi_w
        \right) 
            \cdot 
        \frac{
            \vol(I_{w, r}^0)
            \vol(\tp{I}_{w, r}^0)
        }{
            \vol(I_{w, r}^0 \cap 
            \tp{I}_{w, r}^0)
        }
            \cdot 
        \la 
            \varphi_w, 
            \cg{\varphi}_w
        \ra_{\pi_w}
    \]
\end{theorem}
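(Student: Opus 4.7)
The plan is to treat $\mathcal{I}_{a_w}$ and $\mathcal{I}_{b_w}$ separately by recognizing each as a Godement-Jacquet zeta integral, applying the functional equation of \cite{Jac79} to introduce the $L$- and $\epsilon$-factors, and then using the Bushnell-Kutzko cover structure together with Schur orthogonality on the finite group $L_{a_w}(\OO_w/\p_w^r\OO_w)$ to isolate the inner-product factor $\la \phi_\bullet, \cg{\phi}_\bullet \ra_{\pi_\bullet}$. The two computations are entirely parallel after swapping $(I_{a_w,r}^0, \chi_{w,1})$ for $(\tp{I}_{b_w,r}^0, \chi_{w,2}^{-1})$, so I would focus on $\mathcal{I}_{a_w}$.

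For fixed $A_1$, the function $A_2 \mapsto \la \pi_{a_w}(A_1) \phi_{a_w}, \cg{\pi}_{a_w}(A_2) \cg{\phi}_{a_w} \ra_{\pi_{a_w}}$ is a matrix coefficient of $\cg{\pi}_{a_w}$, and by Lemma \ref{Phi2 support} the Schwartz function $\Phi_w^{(1)}$ is, up to a volume constant, the Fourier transform of the locally constant extension of $\phi_{\nu_{a_w}}$ to $M_{a_w}(\KK_w)$. The inner integral thus has the shape of a Godement-Jacquet zeta integral for $\cg{\pi}_{a_w} \otimes \chi_{w,1}^{-1}$ with Fourier-transformed Schwartz datum. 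Applying the functional equation produces the ratio
\[
    \frac{\epsilon(-s + \tfrac{1}{2}, \pi_{a_w} \otimes \chi_{w,1})\, L(s + \tfrac{1}{2}, \cg{\pi}_{a_w} \otimes \chi_{w,1}^{-1})}{L(-s + \tfrac{1}{2}, \pi_{a_w} \otimes \chi_{w,1})}
\]
appearing in the target formula and replaces the inner integral by a ``dual'' zeta integral against $\pi_{a_w} \otimes \chi_{w,1}$, with the Fourier-inverse of $\Phi_w^{(1)}$ as Schwartz datum. By construction this Fourier-inverse is exactly the locally constant extension of $\mu_{a_w}$ (appropriately twisted) to $\X_w^{(1)} = \tp{I}_{a_w,r}^0 \cdot I_{a_w,r}^0$ defined in Equation \eqref{def extn mu aw}.

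I would then reduce the resulting dual integral to a Schur orthogonality calculation. Writing $A_2 \in \X_w^{(1)}$ as $A_2 = \tp{\gamma}_1 \gamma_2$ with $\tp{\gamma}_1, \gamma_2 \in I_{a_w,r}^0$, Equation \eqref{def extn mu aw} identifies $\mu_{a_w}(A_2)$ with an explicit matrix coefficient of the type $\tau_{a_w}$ acting through $I_{a_w,r}^0/I_{a_w,r} \cong L_{a_w}(\OO_w/\p_w^r\OO_w)$. The integrand then becomes a product of matrix coefficients of $\tau_{a_w}$ in $A_1$, $\gamma_1$, $\gamma_2$, paired with $\la \pi_{a_w}(A_1)\phi_{a_w}, \cg{\pi}_{a_w}(\gamma_2^{-1}\tp{\gamma}_1^{-1})\cg{\phi}_{a_w}\ra_{\pi_{a_w}}$, and iterated Schur orthogonality for the irreducible $\tau_{a_w}$ collapses this to
\[
    \frac{\vol(\X_w^{(1)})\, \vol(I_{a_w,r}^0)}{(\dim \tau_{a_w})^2}\, \la \phi_{a_w}, \cg{\phi}_{a_w} \ra_{\pi_{a_w}},
\]
which is the claimed formula. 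To conclude, I would plug the formulas for $\mathcal{I}_{a_w}$ and $\mathcal{I}_{b_w}$ into Corollary \ref{corZwIaIb}, use Equations \eqref{size phia}--\eqref{size phi} and the identity $\vol(\X_w^{(1)}) = \vol(\tp{I}_{a_w,r}^0)\vol(I_{a_w,r}^0)/\vol(\tp{I}_{a_w,r}^0 \cap I_{a_w,r}^0)$ together with its $\X_w^{(4)}$ analogue to reassemble the volume prefactors into $\vol(I_{w,r}^0)\vol(\tp{I}_{w,r}^0)/\vol(I_{w,r}^0 \cap \tp{I}_{w,r}^0)$ times $\la\varphi_w, \cg{\varphi}_w\ra_{\pi_w}$.

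The main obstacle will be the Schur-orthogonality step. In \cite{EHLS} the analogue of $\tau_{a_w}$ is a character, so the matrix coefficient identities are essentially formal and the ``telescoping'' trick suffices; here $\tau_{a_w}$ may have arbitrary finite dimension, and one must rely on the dual extensions of Equations \eqref{def extn mu aw}--\eqref{def ext mu w}. The subtle point is verifying that these extensions interact with the Godement-Jacquet functional equation so that only the scalar $(\dim \tau_{a_w})^{-2} \vol(I^0_{a_w,r}) \la \phi_{a_w}, \cg{\phi}_{a_w}\ra_{\pi_{a_w}}$ survives—precisely Schur orthogonality encoding the dual-basis structure of a Bushnell-Kutzko type.
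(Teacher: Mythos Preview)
Your proposal is correct and follows essentially the same approach as the paper's proof: apply the Godement--Jacquet functional equation to the inner integral in $A_2$, recognize the resulting Fourier-inverse as the extension $\nu_{a_w}$ supported on $\X_w^{(1)}$, decompose $A_2$ according to the $\tp{I}_{a_w,r}^0 I_{a_w,r}^0$ structure, and then apply Schur orthogonality twice to collapse the integrals. The only cosmetic difference is that the paper uses the \emph{unique} refined decomposition $A_2 = \gamma_1 k_2 \gamma_2$ with $\gamma_1 \in \tp{I}_{a_w,r}^0 \cap \tp{P}_{a_w}^u$, $k_2 \in K_{a_w}$, $\gamma_2 \in I_{a_w,r}^0 \cap P_{a_w}^u$, and unwinds the pairing $\la \pi_{a_w}(k_2 A_1)\phi_{a_w}, \cg{\phi}_{a_w}\ra_{\pi_{a_w}}$ via the integral formula \eqref{inner product pi w integral} together with the support condition on $\varphi_{a_w}$, before invoking orthogonality on $K_{a_w}$; this makes the volume bookkeeping slightly more transparent than your non-unique $\tp{\gamma}_1\gamma_2$ parametrization, but the content is the same.
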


\begin{proof}
    This proof is inspired by the argument of \cite[Theorem 4.3.10]{EHLS}. First, write
    \begin{align*}
        \mathcal{I}_{a_w} = 
        \int_{I_{a_w, r}^0}
            \mu_{a_w}(A_1) 
            \chi_{w,2}(A_1)
             \mathcal{I}_{a_w, 2}(A_1)
        d^\times A_1 \ ,
    \end{align*}
    where $\mathcal{I}_{a_w, 2} = \mathcal{I}_{a_w, 2}(A_1)$ is defined as
    \begin{align*}
        \int_{\GL_{a_w}(\KK_w)} 
                \Phi_w^{(1)}(A_2) 
                \absv{
                    \det A_2
                }_w^{s + \frac{a_w}{2}}
                \la
                    \pi_{a_w}(A_1)
                    \phi_{a_w}, 
                    (
                        \chi_{w,1}^{-1} 
                        \otimes 
                        \cg{\pi}_{a_w}
                    )(A_2)
                    \cg{\phi}_{a_w}
                \ra_{\pi_{a_w}} 
            d^\times A_2 \ .
    \end{align*}
    
    The above is a ``Godement-Jacquet'' integral, as defined in \cite[Equation (1.1.3)]{Jac79}. Therefore, we use its functional equation to obtain
    \begin{align*}
        &\mathcal{I}_{a_w,2} = 
            \frac{
                \epsilon(
                    -s + \frac{1}{2},
                    \pi_{a_w} 
                    \otimes 
                    \chi_{w,1}
                ) 
                L(
                    s + \frac{1}{2},
                    \cg{\pi}_{a_w} 
                    \otimes 
                    \chi_{w,1}^{-1}
                )
            }{
                L(
                    -s+\frac{1}{2},
                    \pi_{a_w} 
                    \otimes 
                    \chi_{w,1}
                )
            } \\
            &\times
            \int_{\GL_{a_w}(\KK_w)}
                \left( 
                    \Phi_w^{(1)} 
                \right)^{\wedge}(A_2)
                \absv{
                    \det A_2
                }_w^{-s + \frac{a_w}{2}}
                \chi_{w,1}(A_2) 
                \la 
                    \pi_{a_w}(A_1) 
                    \phi_{a_w},
                    \cg{\pi}_{a_w}(A_2^{-1})
                    \cg{\phi}_{a_w}
                \ra_{\pi_{a_w}}
            d^\times A_2
    \end{align*}
    
    Let $L_{a_w, \ord}$ denote the quotient of $L$-factors and $\epsilon$-factors leading the expression above. Recall that 
    $
        \left( 
            \Phi_w^{(1)} 
        \right)^{\wedge}(A_2)
    $ is supported on $\X^{(1)}$. Furthermore, for $A_2 \in \X^{(1)}$, we have 
    $
        \left( 
            \Phi_w^{(1)} 
        \right)^{\wedge}(A_2)
            =
        \nu_{a_w}(A_2)
    $
    and 
    $
        \absv{
            \det A_2
        }_w = 1
    $. Then, $\mathcal{I}_{a_w,2}$ is equal to
    \begin{align*}
        L_{a_w, \ord} \times	
        \int_{\X^{(1)}}
            \chi_{w,1}(A_2)
            \nu_{a_w}(A_2)
            \la 
                \pi_{a_w}(A_1) 
                \phi_{a_w},
                \cg{\pi}_{a_w}(A_2^{-1}) 
                \cg{\phi}_{a_w}
            \ra_{\pi_{a_w}}
        d^\times A_2
    \end{align*}

    By definition of $\X^{(1)}$, we can write $A_2 = \gamma_1k_2\gamma_2$ uniquely for some $k_2 \in K_{a_w}$, $\gamma_1 \in \X^{(1)}_l := \tp{I}^0_{a_w, r} \cap \tp{P}_{a_w}^u$, and $\gamma_2 \in \X^{(1)}_u := I_{a_w, r}^0 \cap P_{a_w}^u$. It follows that
    \begin{align*}
        \la 
            \pi_{a_w}(A_1) 
            \phi_{a_w},
            \cg{\pi}_{a_w}(A_2^{-1}) 
            \cg{\phi}_{a_w}
        \ra_{\pi_{a_w}} 
        &=
        \la 
            \pi_{a_w}(k_2 \gamma_2 A_1) 
            \phi_{a_w},
            \cg{\pi}_{a_w}(\gamma_1^{-1}) 
            \cg{\phi}_{a_w}
        \ra_{\pi_{a_w}} \\
        &=
        \la 
            \pi_{a_w}(k_2 A_1) 
            \phi_{a_w},
            \cg{\phi}_{a_w}
        \ra_{\pi_{a_w}} \\
        &=
        \int_{\GL_{a_w}(\OO_w)}
            (
                \varphi_{a_w} (k k_2 A_1),
                \cg{\varphi}_{a_w}(k)
            )_{a_w}
        d^\times k
    \end{align*}
    
    The support of $\varphi_{a_w}$ is $P_{a_w} I_{a_w, r} = P_{a_w} I^0_{a_w, r}$. Since $k_2 A_1 \in I^0_{a_w, r}$, the integrand vanishes unless $k \in P_{a_w} I_{a_w, r} \cap \GL_{a_w}(\OO_w) = I^0_{a_w, r}$. Using the fact that such $k$ is in $P_{a_w}$ as well as Equation \eqref{def varphi aw}, we obtain
    \[
        (
            \varphi_{a_w} (k k_2 A_1),
            \cg{\varphi}_a(k)
        )_{a_w}
        =
        (
            \varphi_{a_w} (k_2 A_1),
            \cg{\varphi}_{a_w}(1)
        )_{a_w}
        =
        (
            \tau_{a_w}(k_2 A_1) 
            \phi^0_{a_w},
            \cg{\phi}^0_{a_w}
        )_{a_w} .
    \]
    
    Then, using the above, Equation \eqref{def extn mu aw}, the definition of $\nu_{a_w}$, and orthogonality relations of matrix coefficients, we obtain
    \begin{align*}
        \mathcal{I}_{a_w, 2}
            &=
        L_{a_w, \ord} \times
        \int_{\X^{(1)}}
            \mu'_{a_w}(A_2)
            \la 
                \pi_{a_w}(A_1) 
                \phi_{a_w},
                \cg{\pi}_{a_w}(A_2^{-1}) 
                \cg{\phi}_{a_w}
            \ra_{\pi_{a_w}}
        d^\times A_2 \\
            &=
        L_{a_w, \ord}
        \vol(I_{a_w, r}^0)
        \vol(\X^{(1)}_l) 
        \vol(\X^{(1)}_u) \\
            &\times
        \int_{K_{a_w}}
            ( 
                \phi^0_{a_w},
                \cg{\tau}_{a_w}(k_2)
                \cg{\phi}^0_{a_w}
            )_{a_w}
            (
                \tau_{a_w}(k_2)
                \tau_{a_w}(A_1) 
                \phi^0_{a_w},
                \cg{\phi}^0_{a_w}
            )_{a_w}
        d^\times k_2 \\
            &=
        L_{a_w, \ord}
        \vol(I_{a_w, r}^0)
        \frac{
            \vol(\X^{(1)})
        }{
            \dim \tau_{a_w}
        }
        ( 
            \phi^0_{a_w},
            \cg{\phi}^0_{a_w}
        )_{a_w}
        (
            \tau_{a_w}(A_1) 
            \phi^0_{a_w},
            \cg{\phi}^0_{a_w}
        )_{a_w}
    \end{align*}
    
    Using Equation \eqref{size phia}, orthogonality relations of matrix coefficients once more, and the normalization $(\phi^0_{a_w}, \cg{\phi}^0_{a_w})_{a_w} = 1$, we ultimately obtain that $\mathcal{I}_{a_w}$ is equal to
    \begin{align*}
        &L_{a_w, \ord}
        \la
            \phi_{a_w}, 
            \cg{\phi}_{a_w} 
        \ra_{\pi_{a_w}}
        \frac{
            \vol(\X^{(1)})
        }
        {
            \dim \tau_{a_w}
        } 
        \int_{I_{a_w, r}^0} 
            \mu'_{a_w}(A_1)
            (
                \tau_{a_w}(A_1) 
                \phi^0_{a_w},
                \cg{\phi}^0_{a_w}
            )_{a_w}
        d^\times A_1 \\
        = 
        &L_{a, \ord}
        \frac{
            \vol(\X^{(1)}) 
            \vol(I_{a_w, r}^0)
        }
        {
            (\dim \tau_{a_w})^2
        } 
        \la 
            \phi_{a_w}, 
            \cg{\phi}_{a_w} 
        \ra_{\pi_{a_w}}
    \end{align*}
    
    A similar argument yields
    \begin{align*}
        \mathcal{I}_{b_w}
        &=
        L_{b_w, \ord}
        \frac{
            \vol(\X^{(4)})
            \vol(I_{b_w, r}^0)
        }
        {
            (\dim \tau_{b_w})^2
        }
        \la 
            \phi_{b_w}, 
            \cg{\phi}_{b_w} 
        \ra_{\pi_{b_w}} \ ,
    \end{align*}
    where
    \begin{align*}
        L_{b_w, \ord} =
        \frac{
            L(
                s+\frac{1}{2},
                \pi_{b_w} 
                \otimes 
                \chi_{w,2}
            )
        }{
            \epsilon(
                s + \frac{1}{2},
                \pi_{b_w} 
                \otimes 
                \chi_{w,2}
            ) 
            L(
                -s + \frac{1}{2},
                \cg{\pi}_{b_w} 
                \otimes 
                \chi_{w,2}^{-1}
            ) \ .
        }
    \end{align*}
    
    Therefore, the result follows using Corollary \ref{corZwIaIb}, Equation \eqref{size varphi}, and the identity
    \[
        \vol(\X^{(1)})\vol(\X^{(4)}) 
        = 
            \frac{
                \vol(I_{a_w, r}^0)
                \vol(\tp{I}_{a_w, r}^0)
                \vol(I_{b_w, r}^0)
                \vol(\tp{I}_{b_w, r}^0)
            }{
                \vol(I_{a_w, r}^0 \cap 
                \tp{I}_{a_w, r}^0)
                \vol(I_{b_w, r}^0 \cap 
                \tp{I}_{b_w, r}^0)
            } 
        =
            \frac{
                \vol(I_{w, r}^0)
                \vol(\tp{I}_{w, r}^0)
            }{
                \vol(I_{w, r}^0 \cap 
                \tp{I}_{w, r}^0)
            }    
    \]
\end{proof}

\subsection{Main Local Theorem} \label{section main local thm}
Keeping with the notation of Theorem \ref{main thm - comp of Zw}, define
\[
    E_p\left( 
        s+\frac{1}{2}, \Pord, \pi_p, \chi_p
    \right)
        := 
    \prod_{w \in \Sigma_p} 
        L\left( 
            s+\frac{1}{2}, \Pord, \pi_w, \chi_w
        \right).
\]

Then, from Theorem \ref{main thm - comp of Zw} and \eqref{def local zeta integrals}, we immediately obtain our main result.

\begin{theorem} \label{main local thm}
    Let $\chi$ be a unitary Hecke character of $\KK$, $\chi_p = \otimes_{w \mid p} \chi_w$, and let $s \in \CC$. Let $f_p^+ \in I_p(\chi_p, s)$ be the local Siegel-Weil section defined in Section \ref{construction f+} and let $\varphi_p \in \pi_p$, $\cg{\varphi}_p \in \cg{\pi}_p$ be the test vectors defined in \eqref{def test vec varphi p and cg varphi p}.
    
    Then, the $p$-adic local zeta integral $I_p(\varphi_p, \cg{\varphi}_p, f_p^+; \chi_p, s)$ is equal to
    \[
        E_p\left( 
            s+\frac{1}{2}, \Pord, \pi_p, \chi_p
        \right)
            \cdot
        \frac{
            \vol(I_{P, r}^0)
            \vol(\tp{I}_{P, r}^0)
        }{
            \vol(I_{P, r}^0 \cap 
            \tp{I}_{P, r}^0)
        }
    \]
\end{theorem}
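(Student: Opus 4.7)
The plan is to derive Theorem \ref{main local thm} as an almost immediate corollary of Theorem \ref{main thm - comp of Zw}, which already carries out the hard local computation for a single $w \in \Sigma_p$. The remaining work is purely bookkeeping: matching the product decomposition of the global-at-$p$ data with the local-at-$w$ factors, and reassembling the volume factors.

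First, I would unwind the factorizations set up in Sections \ref{conv test vectors} and \ref{SW section at p}. Via the isomorphism \eqref{prod G over Zp}, we have $U_1(\QQ_p) = \prod_{w \in \Sigma_p} U_{1,w}$ with $U_{1,w} = \GL_n(\KK_w)$; by \eqref{facto pi p} and \eqref{facto cg pi p}, $\pi_p$ and $\cg{\pi}_p$ decompose as $\mu_p \otimes \bigotimes_{w \in \Sigma_p} \pi_w$ and $\mu_p^{-1} \otimes \bigotimes_{w \in \Sigma_p} \cg{\pi}_w$ respectively; the section $f_p^+$ is a product of the $f_{w,s}^+$ by construction in Section \ref{construction f+} (cf.\ Equation \eqref{fp and fws}). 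Since $\varphi_p$ and $\cg{\varphi}_p$ are defined in \eqref{def test vec varphi p and cg varphi p} to be the trivial vector in the $\mu_p$ component tensored with $\bigotimes_w \phi_w$ (resp.\ $\bigotimes_w \cg{\phi}_w$), the pairing $\la\pi_p(u)\varphi_p,\cg{\varphi}_p\ra_{\pi_p}$ factors as $\prod_{w} \la \pi_w(u_w)\phi_w, \cg{\phi}_w\ra_{\pi_w}$ when $u=(u_w)_w \in U_1(\QQ_p)$ (the $\mu_p$-contribution is trivial on the kernel of $\nu$). Combining with the definition \eqref{def local zeta integrals}, I get
\[
    I_p(\varphi_p, \cg{\varphi}_p, f_p^+;\chi_p, s) = \prod_{w \in \Sigma_p} \frac{Z_w}{\la \phi_w, \cg{\phi}_w\ra_{\pi_w}} .
\]

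Second, I apply Theorem \ref{main thm - comp of Zw} at each $w \in \Sigma_p$ to rewrite each factor of the product as
\[
    L\!\left(s + \tfrac{1}{2}, \ord, \pi_w, \chi_w\right) \cdot \frac{\vol(I_{w,r}^0)\,\vol(\tp{I}_{w,r}^0)}{\vol(I_{w,r}^0 \cap \tp{I}_{w,r}^0)} .
\]
By the very definition of $E_p$ in Section \ref{section main local thm}, the product over $w$ of the local $L$-factors yields $E_p(s+\tfrac{1}{2}, \Pord, \pi_p, \chi_p)$.

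Third, I collect the volume factors using the product descriptions of the $P$-Iwahoric subgroups given at the end of Section \ref{level at p}: $I_{P,r}^0 = \ZZ_p^\times \times \prod_{w \in \Sigma_p} I_{w,r}^0$, and analogously for $\tp{I}_{P,r}^0$ and $I_{P,r}^0 \cap \tp{I}_{P,r}^0$. The $\ZZ_p^\times$ factor cancels between numerator and denominator, so
\[
    \prod_{w \in \Sigma_p} \frac{\vol(I_{w,r}^0)\,\vol(\tp{I}_{w,r}^0)}{\vol(I_{w,r}^0 \cap \tp{I}_{w,r}^0)} = \frac{\vol(I_{P,r}^0)\,\vol(\tp{I}_{P,r}^0)}{\vol(I_{P,r}^0 \cap \tp{I}_{P,r}^0)},
\]
which completes the identification.

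There is no real obstacle here: the substantive content is Theorem \ref{main thm - comp of Zw}, and the only point that deserves minimal care is verifying that the integration over $U_1(\QQ_p)$ factors cleanly across $\Sigma_p$ without any leftover contribution from the similitude character $\mu_p$ — but this is immediate since $U_1 = \ker \nu_1$ and the chosen vectors at the $\mu_p$-component are the distinguished ones making the $\mu_p \otimes \mu_p^{-1}$ pairing trivial.
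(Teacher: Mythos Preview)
Your proposal is correct and follows exactly the approach the paper takes: the paper itself states only that the result is ``immediately obtained'' from Theorem \ref{main thm - comp of Zw} and \eqref{def local zeta integrals}, and your write-up is simply a careful unpacking of that sentence. The one imprecision is your claim that the $\ZZ_p^\times$ volume factor ``cancels between numerator and denominator'' --- strictly speaking it appears twice in the numerator and once in the denominator, so you are implicitly using the standard normalization $\vol(\ZZ_p^\times)=1$, which the paper also takes for granted.
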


\begin{remark}
    Using the same minor manipulation explained in \cite[Remark 4.3.11]{EHLS}, we see that the $p$-Euler factor $E_p(s+\frac{1}{2}, \Pord, \pi_p, \chi_p)$ takes the form of a modified Euler factor at $p$ as predicted in \cite[Section 2, Equation (18b)]{Coa89} for the conjectures of Coates and Perrin-Riou on $p$-adic $L$-functions.
\end{remark}

\bibliography{references.bib}
\bibliographystyle{amsalpha}

\end{document}